\DeclareMathOperator{\Diff}{Diff}
\DeclareMathOperator{\Ric}{Ric}
\DeclareMathOperator{\interior}{Int}
\DeclareMathOperator{\spt}{spt}
\DeclareMathOperator{\Id}{Id}
\DeclareMathOperator{\dmn}{dmn}
\DeclareMathOperator{\Vol}{Vol}
\newtheorem{theo}{Theorem}[]
\newtheorem{lemme}[theo]{Lemma}
\newtheorem{definition}[theo]{Definition}
\newtheorem{coro}[theo]{Corollary}
\newtheorem{conj}[theo]{Conjecture}
\newtheorem{remarque}[theo]{Remark}
\begin{document}

\title[Local min-max surfaces and minimal Heegaard splittings]{Local min-max surfaces and strongly irreducible minimal Heegaard splittings} 
\author{Antoine Song} \thanks{The author was supported by NSF-DMS-1509027.}

\begin{abstract}
Let $(M,g)$ be a closed oriented Riemannian $3$-manifold not diffeomorphic to the 3-sphere and suppose that there is a strongly irreducible Heegaard splitting $H$. We prove that $H$ is either isotopic to a minimal surface of index at most one or isotopic to the stable oriented double cover of a non-orientable minimal surface with a vertical handle attached. In particular, this proves a result conjectured by H. J. Rubinstein. Some consequences include the existence in any $\mathbb{R}P^3$ of either a minimal torus or a minimal projective plane with stable universal cover. In the case of positive scalar curvature, we show for spherical space forms not diffeomorphic to $S^3$ or $\mathbb{R}P^3$ that any strongly irreducible Heegaard splitting is isotopic to a minimal surface, and that there is a minimal Heegaard splitting of area less than $4\pi$ if $R\geq 6$.

\end{abstract}

\maketitle

\section*{}

When studying the topology of $3$-manifolds, it can be useful to realize a Heegaard splitting as a minimal surface, since the geometric nature of minimal surfaces simplifies comparisons between splittings and helps in classification or counting problems. Such a result was announced by Rubinstein in \cite[Theorem 1.8]{Rubinsteinnotes}, where he sketches a proof, and it was used for instance by Colding, Gabai \cite{ColdingGabai} and Colding, Gabai, Ketover \cite{ColdingGabaiKetover} to prove an effective version of the classification of irreducible Heegaard splittings in non-Haken hyperbolic $3$-manifolds first proved by Tao Li with other methods \cite{TaoLI1} \cite{TaoLI2} \cite{TaoLI3}. Apparently a complete proof of \cite[Theorem 1.8]{Rubinsteinnotes} was not published, so we state it as a conjecture:

\begin{conj}[Rubinstein \cite{Rubinsteinnotes}] 

Let $(M,g)$ be a closed oriented Riemannian $3$-manifold not diffeomorphic to the $3$-sphere and suppose that there is a strongly irreducible Heegaard splitting $H$. Then $H$ is either isotopic to a minimal surface or isotopic to the oriented double cover of a non-orientable minimal surface with a vertical handle attached.
\end{conj}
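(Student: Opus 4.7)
The plan is to apply a Simon-Smith-type min-max construction to a sweepout of $M$ adapted to $H$, and then exploit strong irreducibility to pin down the isotopy type of the limiting minimal surface. Write $M = U_1 \cup_H U_2$ as the union of two handlebodies with spines $\Gamma_1, \Gamma_2$; this yields a natural smooth sweepout $\{\Sigma_t\}_{t \in [0,1]}$ with $\Sigma_0 = \Gamma_1$, $\Sigma_1 = \Gamma_2$, and each $\Sigma_t$ for $t \in (0,1)$ isotopic to $H$. Let $\Pi$ be the saturation of this sweepout under ambient isotopies of $M$ fixing $\Gamma_1 \cup \Gamma_2$, and set
\[
W(\Pi) \;=\; \inf_{\{\Sigma_t\} \in \Pi}\ \max_{t \in [0,1]} \mathcal{H}^2(\Sigma_t),
\]
which is strictly positive since the family sweeps out $M$.

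The second step applies the existence theory of Simon-Smith, Colding-De Lellis, and Ketover to produce a pulled-tight minimizing sequence whose maximum slices converge as varifolds to a smooth closed embedded minimal surface $\Gamma = \sum_i n_i \Gamma_i$ with $\|\Gamma\| = W(\Pi)$. Along with this one has two topological constraints: a genus bound (De Lellis-Pellandini, extended to the non-orientable case by Ketover) controlling the total weighted complexity of the support of $\Gamma$ by $g(H)$, where non-orientable components enter via their orientable double covers; and an index bound saying that the sum of Morse indices on orientable multiplicity-one components of $\Gamma$ is at most one. The remaining work is to rule out, or precisely identify, the degenerations still allowed by these bounds.

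This is where strong irreducibility enters. By Scharlemann's no-disjoint-disks principle, any neck-pinch-type degeneration of slices near the maximum would, via standard surgery at the pinching neck, produce two disjoint compressing disks for the nearby slice lying on opposite sides of $H$; this is forbidden. Combined with Pitts-type regularity, this should force the convergence $\Sigma^n_{t_n} \to \Gamma$ to degenerate only through multiplicity and non-orientability, not through any genuine reduction of isotopy complexity. When all components of $\Gamma$ are orientable with multiplicity one, this identifies the support of $\Gamma$ with a minimal surface isotopic to $H$ of index at most one, giving the first alternative.

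The principal obstacle, and the source of the second alternative, is the case in which a non-orientable minimal surface $N$ appears in $\Gamma$ with multiplicity two. A tubular neighborhood of $N$ is a twisted interval bundle whose boundary is the orientable double cover $\tilde N$; a connected two-sheeted approximation of $N$ by a sweepout slice must contain at least one thin tube joining the two approaching sheets, and this tube is the ``vertical handle'' that survives in the limit. The delicate points will be: first, using strong irreducibility to exclude every other mixed configuration of multiplicities and orientation types compatible with the genus and index bounds; second, upgrading the varifold convergence near $N$ to enough smooth control to identify the handle topologically and verify that attaching it to $\tilde N$ recovers a surface isotopic to $H$. This neck analysis, together with the matching of the genus count, is the technical heart of the proof.
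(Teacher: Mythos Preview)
Your outline is essentially the approach Ketover carried out in \cite{Ketgenusbound}, and it runs into exactly the obstacle the paper is written to overcome: the min-max limit $\Gamma=\sum_i n_i\Gamma_i$ can have \emph{several} components, some of them stable. Your claim that strong irreducibility forbids neck-pinches is not correct. What strong irreducibility gives (see Remark~\ref{topolo}) is only that surgeries along \emph{essential} curves all occur on one fixed side of $H$; surgeries along inessential curves can still occur on either side and split off spheres. So after the min-max you may perfectly well obtain, say, a stable minimal sphere together with a lower-genus surface obtained from $H$ by compressions on one side, all with multiplicity one and total index at most one. In that scenario the support of $\Gamma$ is disconnected and is not isotopic to $H$, and nothing in your argument tells you what to do next. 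The genus bound and the index bound do not rule this out: the stable components contribute zero index, and the compressed piece has smaller genus than $H$, so both inequalities are satisfied.

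The paper's resolution is not a refinement of the global min-max but a genuinely different architecture. One first passes to a \emph{core} $\mathfrak{C}\subset M$: the complement of a maximal $H$-compatible family of disjoint stable minimal surfaces each bounding a handlebody (Lemma~\ref{max spheres}, Corollary~\ref{existence core}). Then one runs a \emph{local} Simon-Smith min-max on $\mathfrak{C}$ with its stable boundary (Theorem~\ref{smoothminmax}, Lemma~\ref{canapply2}), proving in particular that at least one component of the min-max surface lies in $\interior(\mathfrak{C})$. The maximality of the core is what finally does the work your argument is missing: if the interior component were stable (or could be area-minimized on its non-handlebody side to something stable), one could enlarge the $H$-compatible family, contradicting maximality. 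This forces the interior component to be unstable, connected, and---after the one-sided surgery analysis and the catenoid estimate ruling out the unstable non-orientable case---isotopic to $H$ or to the double cover of a non-orientable surface with a vertical handle. Without the core construction and the local min-max, the multi-component issue you glossed over is a genuine gap.
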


Since a Heegaard splitting determines a continuous family of surfaces foliating the manifold and whose two ends are graphs (the spines of the two handlebodies), a natural direction to prove such a result is the min-max theory for minimal surfaces. Its principle can be roughly described as follows: taking a sequence of $1$-parameter smooth families of surfaces $\{\Sigma^i_t\}_{t\in[0,1]}$ sweeping out the manifold, which are tighter and tighter in the sense that $\max_t \mathcal{H}^2(\Sigma^i_t)$ converges to the infimum possible among sweepouts in a same homotopy class, there exists a subsequence of surfaces $\Sigma^{j}_{t_{j}}$ converging to a minimal surface.

Based on this observation, Rubinstein outlined a proof that in the case when the Heegaard splitting $H$ is strongly irreducible then the min-max theory should create a minimal surface isotopic to $H$ or whose double cover plus a vertical neck is isotopic to $H$. Since the minimal surface obtained by min-max could be broken into several components and thus not isotopic to $H$, his idea was to iterate the min-max procedure until the desired situation occurred. The proof sketched by Rubinstein, while insightful and natural, remains vague or incomplete on essentially two points. Based on another unpublished announced result of Pitts and Rubinstein \cite{PittsRubinstein}, he assumes that the min-max surface essentially contains the topological information of the sweepouts, and this was proved later by Ketover \cite{Ketgenusbound}. Moreover he suggests to apply the min-max theorem but taking into account only a subdomain of the manifold, and he claims that one gets a minimal surface inside the interior of such a subdomain. This needs to be defined and justified (see Theorem \ref{smoothminmax}). Besides, in \cite[Corollary 1.9]{Rubinsteinnotes} another minor point is that he takes the limit of min-max surfaces by stating that their areas are bounded, which is unclear since the min-max widths he just considered are attached to subdomains and could a priori be unbounded, even if the metrics converge.

In \cite{Ketgenusbound}, Ketover analyzed how the subsequence $\Sigma^{j}_{t_{j}}$ converges to a min-max limit surface $\Sigma^\infty$. He deduced a genus bound for $\Sigma^\infty$ which was conjectured by Pitts and Rubinstein. He also made an important step towards a proof of Rubinstein's claimed result, by proving that either it was true, or $\Sigma^\infty$ had several components and was obtained by some surgeries. The problem left to get the final result was that $\Sigma^\infty$ may be broken into several components. 

In this paper, we prove the conjecture thanks to a local version of the min-max theorem. In fact,   the issue appears only in the case the min-max limit has more than one non-sphere component and since they bound handlebodies under our assumptions, we want to get rid of such possible handlebodies and 3-balls with stable minimal boundaries of $M$. Thus we reformulate the strategy of \cite{Rubinsteinnotes} as follows: instead of iterating the min-max procedure we remove a maximal disjoint union of undesired handlebodies that could appear after min-max, we get a compact $3$-manifold $N$ with boundary called core (see Subsection \ref{corecore}), and we apply the local min-max to $N$. The min-max surface, which we prove is not included in the boundary of $N$, cannot be non-trivially broken into several components by construction of $N$ and the conjecture is proved. We insist that the crucial step (and main result of this paper) is to show that we obtain a minimal surface that is not entirely included in the boundary $\partial N$. 

\begin{theo} 
Let $(M,g)$ be a closed oriented $3$-manifold. Suppose that there is a strongly irreducible Heegaard splitting $H$ of $M$ which, in the case where $M$ is a $3$-sphere, is supposed to be a $2$-sphere. Then either $H$ is isotopic to a minimal surface of index at most one, or isotopic to the stable minimal oriented double cover of a non-orientable minimal surface $\Sigma_2$ with a vertical handle attached.

\end{theo}

This statement is actually slightly more precise than the original conjecture because if $H$ is not isotopic to a minimal surface of index at most one, then it is isotopic to the \textit{stable} oriented double cover of a non-orientable surface with a vertical handle attached. For this precise improvement, the catenoid estimate of Ketover, Marques and Neves \cite{KeMaNe} will be useful. Notice that the proof using min-max does not work for the torus splitting of the $3$-sphere: in sufficiently round spheres, starting from a strongly Heegaard splitting of genus one, min-max can only give minimal spheres and not tori. The technical reason is that such a torus in $S^3$ is stabilized, even though it is irreducible.

We note that in \cite{Montezuma1} Montezuma proved another kind of localized min-max theorem for a domain $\Omega$, but it differs from our version in the following way. In \cite{Montezuma1}, $\Omega$ is mean-concave, the surfaces sweep out the entire manifold and only the ones touching $\Omega$ are considered when defining the partial min-max width. In our version $\Omega$ has minimal or mean-convex boundary, and the surfaces only sweep out the domain $\Omega$. Moreover, our version uses the Simon-Smith setting so the analysis of \cite{Ketgenusbound} can be applied, whereas in \cite{Montezuma1}, Theorem B is proved in the Almgren-Pitts setting and no topological information can be deduced for the min-max minimal surface (though a smooth version of it is expected).


As a corollary of the proof of Rubinstein's conjecture, we obtain:

\begin{coro} \label{feinherb}
\begin{enumerate}
\item Any lens space not diffeomorphic to $S^3$ or $\mathbb{R}P^3$ contains a minimal torus with index at most one.
\item Any $\mathbb{R}P^3$ contains either a minimal torus of index at most one or a minimal projective plane with stable universal cover. In particular if the metric is bumpy, then either there is an index one minimal torus or there is an index one minimal sphere.
\end{enumerate}
 \end{coro}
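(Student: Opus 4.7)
The plan is to apply the paper's main theorem directly to the standard genus-one Heegaard splitting $H \cong T^2$ of each lens space. For $L(p,q)$ with $p \neq 1$, this splitting is strongly irreducible, since meridian disks of the two solid tori meet geometrically in $|p| \geq 2$ points. Euler-characteristic bookkeeping controls alternative (b) of the main theorem: if $H$ is isotopic to the oriented double cover $\widetilde\Sigma$ of a non-orientable minimal surface $\Sigma$ with a vertical handle attached, then $\chi(\widetilde\Sigma) - 2 = \chi(T^2) = 0$, forcing $\widetilde\Sigma \cong S^2$ and $\Sigma \cong \mathbb{R}P^2$.

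For part (1), in $L(p,q)$ with $p \neq 2$ I would eliminate alternative (b) using the classical fact that such manifolds contain no embedded $\mathbb{R}P^2$: a tubular neighborhood of an $\mathbb{R}P^2$ in an orientable $3$-manifold is diffeomorphic to $\mathbb{R}P^3$ minus an open ball, so the presence of an embedded $\mathbb{R}P^2$ would produce an $\mathbb{R}P^3$ connected summand, contradicting primality of $L(p,q)$ when $L(p,q) \not\cong \mathbb{R}P^3$. Only alternative (a) survives, giving the desired minimal torus of index at most one. For part (2), both alternatives are a priori possible in $\mathbb{R}P^3$: alternative (a) is the minimal torus of index at most one; in alternative (b) the minimal $\mathbb{R}P^2$ has its oriented double cover $\partial N(\mathbb{R}P^2) \cong S^2 \subset \mathbb{R}P^3$ stable, and this coincides with stability of the universal cover $S^2 \subset S^3$ since the Jacobi quadratic form on the twisted normal bundle of $\mathbb{R}P^2$ is the invariant part of the Jacobi form of the lifted $S^2 \subset (S^3,\tilde g)$.

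For the bumpy refinement, I would treat the two alternatives separately. In alternative (a), the index-at-most-one minimal torus is non-degenerate, and a stable representative would contradict the one-parameter Simon--Smith min-max characterization underlying the main theorem (one could lower the width by pulling tight around a strictly stable critical point), so its index is exactly one. In alternative (b), I would run an auxiliary one-parameter min-max on $\mathbb{R}P^3$ for the genus-zero sweepout --- spheres going from a point to $\mathbb{R}P^2$ counted twice. Because the minimal $\mathbb{R}P^2$ is strictly stable in a bumpy metric, the Ketover--Marques--Neves catenoid estimate supplies a sweepout with maximal area strictly below $2\mathcal{H}^2(\mathbb{R}P^2)$, ruling out the $2\mathbb{R}P^2$ limit and producing a minimal $2$-sphere of index at most one, hence of index exactly one by bumpyness.

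The step I expect to be most delicate is the bumpy case (b): one must invoke Ketover's Simon--Smith topological analysis together with the catenoid estimate to guarantee that the auxiliary sphere min-max limit is a single embedded minimal sphere of index exactly one, rather than a higher-multiplicity or multi-component configuration that could coexist with the stable $\mathbb{R}P^2$.
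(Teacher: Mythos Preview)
Your treatment of part (1) and the first sentence of part (2) is correct and matches the paper: apply the main theorem to the genus-one splitting, use the Euler-characteristic computation to force the non-orientable surface in alternative (b) to be an $\mathbb{R}P^2$, and rule this out for $L(p,q)$ with $p\neq 2$ by primality.

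The bumpy refinement in alternative (b), however, contains a genuine error. You write that ``because the minimal $\mathbb{R}P^2$ is strictly stable in a bumpy metric, the Ketover--Marques--Neves catenoid estimate supplies a sweepout with maximal area strictly below $2\mathcal{H}^2(\mathbb{R}P^2)$.'' This is backwards: the catenoid estimate applies precisely when the oriented double cover is \emph{unstable}, and in that case furnishes a sweepout of a neighborhood with maximal area below $2\mathcal{H}^2(\mathbb{R}P^2)$. When the double cover is strictly stable, every nearby embedded sphere has area strictly \emph{greater} than $2\mathcal{H}^2(\mathbb{R}P^2)$, so no such sweepout can exist; indeed by the argument of Lemma~\ref{canapply2} the sphere width of $\mathbb{R}P^3$ is then strictly larger than $2\mathcal{H}^2(\mathbb{R}P^2)$. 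That inequality does exclude the original $2\mathbb{R}P^2$ as the min-max limit, but it does not by itself prevent the limit from being a collection of stable spheres, or a multiple of a \emph{different} $\mathbb{R}P^2$, together with stable pieces --- exactly the multi-component obstruction you flag in your final paragraph, and which the catenoid estimate cannot resolve in this stable situation.

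The paper handles this case by its local machinery rather than a global sphere min-max. One cuts $\mathbb{R}P^3$ along the minimal $\mathbb{R}P^2$ to obtain a $3$-ball with stable spherical boundary, then passes to a \emph{core} of this ball (Definition~\ref{coredef}, Corollary~\ref{existence core}): a subdomain with stable boundary whose interior contains no stable minimal sphere. Applying the local min-max Theorem~\ref{smoothminmax} to this core produces a min-max sphere with a component in the interior; since the core admits no interior stable sphere and the metric is bumpy, that component has index exactly one. It is the core construction, not the catenoid estimate, that delivers the index-one sphere here.
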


Ketover was also aware of these applications of the conjecture and the second part of the second item was observed to me by Marques. The mapping degree method of White \cite{Whitebumpy} gives the existence of a minimal torus in every $\mathbb{R}P^3$ with positive Ricci curvature. This result was improved in \cite[Theorem 3.3]{KeMaNe} to obtain an index one minimal torus in such a manifold. The previous corollary can be thought of as an extension of these results in the direction of finding index at most one minimal tori. We will give an example showing that the second item in Corollary \ref{feinherb} is optimal for index at most one minimal tori not included in a $3$-ball. 

Specializing these results to the case of positive scalar curvature $R$, we first prove:

\begin{theo}
In a spherical space form not diffeomorphic to $S^3$ or $\mathbb{R}P^3$ with positive scalar curvature, any strongly irreducible Heegaard splitting admits an index one minimal representative in its isotopy class.
\end{theo}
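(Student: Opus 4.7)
The plan is to feed any strongly irreducible Heegaard splitting $H$ into the main theorem (Rubinstein's conjecture, proved in this paper) and then use the positive scalar curvature hypothesis together with the topology of spherical space forms to rule out every possibility except an index one minimal representative in the isotopy class of $H$.

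By the main theorem, $H$ is isotopic either (a) to a minimal surface $\Sigma$ of index at most one, or (b) to the stable oriented double cover $\widetilde N$ of a non-orientable minimal surface $N$ with a vertical handle attached. I first rule out (b). A standard Schoen-type computation, combining the Gauss equation (which rewrites $\Ric(\nu,\nu)$ in terms of $R$, the intrinsic curvature of $\widetilde N$, and $|A|^2$), the stability inequality tested against the constant function, and Gauss-Bonnet, yields
\begin{equation*}
\int_{\widetilde N}\Bigl(\tfrac{1}{2}|A|^2+\tfrac{R}{2}\Bigr)\,\leq\,2\pi\,\chi(\widetilde N).
\end{equation*}
Since $R>0$ strictly, $\chi(\widetilde N)>0$, so each component of $\widetilde N$ is a $2$-sphere. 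Because $N$ is connected and non-orientable, its orientation double cover $\widetilde N$ is connected, so $\widetilde N = S^2$ and $N=\mathbb{R}P^2$. But an embedded $\mathbb{R}P^2$ in a closed orientable $3$-manifold $M$ has a twisted $I$-bundle neighborhood with boundary $S^2$; irreducibility of spherical space forms forces that sphere to bound a ball, hence $M=\mathbb{R}P^3$, contradicting the hypothesis.

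To rule out index zero in case (a), assume for contradiction that $\Sigma$ is stable. The same inequality, applied componentwise, shows each component of $\Sigma$ is a $2$-sphere; since $H$ is a connected Heegaard surface isotopic to $\Sigma$, it follows that $H$ itself is a sphere, so the Heegaard splitting has genus zero, forcing $M=S^3$, again contrary to hypothesis. Hence $\Sigma$ has index exactly one, which proves the theorem.

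The genuine work is hidden in the main theorem: producing a minimal representative in the isotopy class of $H$ with enough topological and index control via the local min-max machinery developed in this paper. Once that is in hand, the remaining step reduces to the short Schoen-type curvature estimate above combined with the classical topological fact that spherical space forms are irreducible. The main obstacle is therefore only to check that, in this specific geometric setting, both alternatives from the main theorem collapse to the desired index one case; the checking is conceptually straightforward but relies essentially on the strict positivity of $R$ and on excluding the two sporadic topologies $S^3$ and $\mathbb{R}P^3$ by hypothesis.
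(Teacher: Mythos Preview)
Your proposal is correct and follows essentially the same approach as the paper: apply the main theorem (Theorem~\ref{positivegenus}), then use the Schoen--Yau stability argument under positive scalar curvature to force any stable orientable minimal surface to be a sphere and any non-orientable minimal surface with stable double cover to be an $\mathbb{R}P^2$, which the topological hypotheses $M\neq S^3,\mathbb{R}P^3$ exclude. The paper's proof of Theorem~\ref{boomchak}(2) is just a terser version of exactly this argument.
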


When $R\geq 6$, the Hersch trick for index one oriented minimal surfaces only gives $16\pi/3$ as an upper area bound when the genus is odd. However when $\Ric>0$, Marques and Neves proved in \cite{MaNe} that in many cases there is a Heegaard splitting of index one and area less than $4\pi$. By extending our method in \cite{Antoine}, where it is proved that there always exists a minimal surface of area at most $4\pi$ when $R\geq 6$, we note the following generalization:

 \begin{theo}
Any spherical space form not diffeomorphic to $S^3$ or $\mathbb{R}P^3$ with $R\geq 6$ admits an index one minimal Heegaard splitting of area less than $4\pi$. In an $\mathbb{R}P^3$ with $R\geq6$, either there is an index one minimal Heegaard splitting of genus $1$ and with area less than $4\pi$ or there is a minimal $\mathbb{R}P^2$ with stable universal cover and with area less than $2\pi$.
\end{theo}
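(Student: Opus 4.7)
The plan is to combine the main theorem (Rubinstein's conjecture) with the construction of \cite{Antoine}, where in any closed Riemannian $3$-manifold with $R\geq 6$ a sphere sweepout is built whose width is strictly less than $4\pi$, producing a minimal surface of area below $4\pi$. Our task is to adapt that construction so that the sweepout lies in the isotopy class of a prescribed strongly irreducible Heegaard splitting, while preserving the bound on the width.

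For the first statement, fix a strongly irreducible Heegaard splitting $H$ of $M=S^3/\Gamma$ (not $S^3$ or $\mathbb{R}P^3$). I would build a smooth $1$-parameter family $\{\Sigma_t\}$ in the isotopy class of $H$ with $\max_t\mathcal{H}^2(\Sigma_t)<4\pi$. The strategy is to take the sphere sweepout from \cite{Antoine} and modify it into a Heegaard sweepout of the correct genus by attaching handles via thin catenoidal necks. The catenoid estimate of Ketover-Marques-Neves \cite{KeMaNe} guarantees that, after a small perturbation, each such handle attachment contributes arbitrarily little area, so the strict inequality from \cite{Antoine} is preserved. The previous theorem then produces an index one minimal representative of $H$ of area at most this width, hence $<4\pi$.

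For the $\mathbb{R}P^3$ statement, applying the same sweepout construction on the genus-$1$ Heegaard torus and invoking the main theorem yields one of two alternatives: either an index one minimal torus, or the stable oriented double cover of an $\mathbb{R}P^2$ with stable universal cover (using the improvement in the main theorem) together with a vertical handle. In the first alternative, the area bound $<4\pi$ is automatic from the width bound. In the second alternative, the min-max limit has area bounded by the width, strictly less than $4\pi$; since the vertical handle can be arranged to contribute arbitrarily small area by the catenoid estimate, the double cover of the $\mathbb{R}P^2$ has area strictly less than $4\pi$, and hence the $\mathbb{R}P^2$ itself has area $<2\pi$.

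The hard part has two aspects. First, the handle-attachment step must produce a sweepout that is genuinely in the isotopy class of $H$; this topological constraint is what makes the catenoid estimate of \cite{KeMaNe} the right tool, since it allows handles to be added in sweepouts without increasing the width. Second, preserving the strict inequality $<4\pi$ requires the strict version of the area bound from \cite{Antoine}, which in turn relies on a rigidity analysis of the equality case for stable minimal surfaces in $3$-manifolds with $R\geq 6$.
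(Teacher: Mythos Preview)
Your approach has a genuine gap. You propose to take ``the sphere sweepout from \cite{Antoine}'' and attach handles via the catenoid estimate to obtain a sweepout in the isotopy class of a prescribed $H$ with maximum area $<4\pi$. But the construction in \cite{Antoine} does not produce a smooth family of embedded spheres in $M$: it works in the Almgren--Pitts setting with discrete homotopy sequences of integral currents, built from an \emph{immersed} (non-embedded) sphere obtained by projecting a minimal sphere from the universal cover $S^3$. There is no smooth embedded family to which one can attach handles, and the catenoid estimate of \cite{KeMaNe}---which controls a neck opened between two nearby sheets---does not apply. Even granting a smooth sphere sweepout, attaching $g$ handles to a single sphere requires tubes running along core curves of a handlebody; these have length (hence area) bounded below and there is no mechanism forcing the result to be isotopic to the given $H$. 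In fact, what you are attempting (a minimal representative of a \emph{prescribed} strongly irreducible $H$ with area $<4\pi$) is stronger than the statement and is flagged in the paper (Remark \ref{physalis}) as an open question.

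The paper's proof is quite different. For $M\not\cong S^3,\mathbb{R}P^3$ it works on a core $\mathfrak{C}$ in the Almgren--Pitts setting: local min-max gives an index one minimal Heegaard splitting $\Sigma$ with $\mathcal{H}^2(\Sigma)=\mathbf{L}(\Pi_{\mathfrak{C}})$, of \emph{unspecified} genus. The width is then bounded by lifting $\mathfrak{C}$ to $S^3$, running local min-max there to get a minimal sphere of area $\leq 4\pi$ (Hersch), projecting it to an immersed surface $S\subset\interior(\mathfrak{C})$, and applying the method of \cite{Antoine} to $S$; strictness follows because $S$ is not embedded while Almgren--Pitts min-max yields embedded surfaces. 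For $\mathbb{R}P^3$ the paper starts instead from an area-minimizing $\mathbb{R}P^2$ (area $<2\pi$ by \cite{BrayBrendleEichmairNeves} unless the metric is round); if its double cover is unstable, the catenoid estimate gives a genus-one sweepout of a core with maximum area $<4\pi$, and local min-max furnishes the alternative.
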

 
The proof uses the local min-max theorem applied to the lift of the manifold to $S^3$. Notice that in the two previous theorems, the first one gives a minimal genus Heegaard splitting while the second one gives an improved area bound for a (not necessarily irreducible) Heegaard splitting. It is tempting to can ask whether these two results can be combined (see Remark \ref{physalis}). It can also be interesting to compare these results with \cite{Montezuma2}, where Montezuma constructs metrics with positive scalar curvature and unbounded min-max widths.

Since all the min-max constructions are localized, versions of the previous results hold for non-prime $3$-manifolds, and manifolds with boundary.

After the first version of this paper was available, a work of D. Ketover and Y. Liokumovich \cite{KetLio} on related questions and techniques appeared.

\subsection*{Acknowledgement} 
I am grateful to my advisor Fernando Cod{\'a} Marques for his support, for engaging discussions about the variational theory of minimal surfaces, and for stimulating questions and answers. I thank Rafael Montezuma for several clarifying conversations about his articles, Amitesh Datta and Maggie Miller for discussions about the topology of $3$-manifolds and Davi Maximo for his interest.


\section{Topological preliminaries and some notations} \label{basics}

\subsection{} \textbf{Generalized Heegaard splittings}

We begin some basic definitions and notations in topology.

A 3-manifold $W$ is a \textit{compression body} if there is a connected closed oriented surface $S$ such that $W$ is obtained from $S\times[0,1]$ by attaching $2$-handles along mutually disjoint loops in $S\times\{1\}$ and filling in some resulting $2$-sphere boundary components with $3$-handles. We do not require to fill in all the $2$-sphere boundary components. Denote $S\times\{0\}$ by $\partial_+W$ and $\partial W \backslash \partial_+W$ by $\partial_-W$. A compression body is called a \textit{handlebody} if $\partial_-W=\varnothing$. It is said to be \textit{trivial} if $W\approx\partial_+ W\times [0,1]$.

Let $M$ be a connected compact oriented $3$-manifold. When $M$ is closed, an embedded connected orientable surface $H$ is a \textit{Heegaard splitting} if $M\backslash H$ has two connected components each diffeomorphic to a handlebody. This notion can be generalized to the case when $\partial M\neq \varnothing$ as follows. Let $(\partial_0M,\partial_1 M)$ be a partition of the boundary components of $M$. A triplet $(W_0,W_1,H)$ is called a \textit{generalized Heegaard splitting} of $(M,\partial_0M,\partial_1M)$ if $W_0$, $W_1$ are compression bodies with 
$$W_0\cup W_1=M, \quad \partial_-W_0=\partial_0M, \quad \partial_-W_1 = \partial_1M$$
 $$\text{ and } W_0\cap W_1 = \partial_+W_0=\partial_+W_1=H.$$ The surface $H$ is then also called a \textit{generalized Heegaard splitting} of $M$. The \textit{Heegaard genus} of $M$ is the lowest possible genus of a generalized Heegaard splitting of $M$. Recall that any triplet $(\partial_0M,\partial_1 M)$ as above admits a generalized Heegaard splitting. 

Let $S$ be an orientable closed surface embedded in the interior of $M$ as above. An \textit{essential disk} in $(M,S)$ is a disk $D$ embedded in $M$ such that $D\cap S=\partial D$ and $\partial D$ is an essential curve in $S$. 

A generalized Heegaard splitting $(W_0,W_1,H)$ is called \textit{irreducible} when there are no essential disks $(D_0,\partial D_0) \subset (W_0,H)$, $(D_1,\partial D_1) \subset (W_1,H)$ such that $\partial D_0 = \partial D_1$. We say that $(W_0,W_1,H)$ is \textit{strongly irreducible} if 
there are no essential disks $(D_0,\partial D_0) \subset (W_0,H)$, $(D_1,\partial D_1) \subset (W_1,H)$ such that $\partial D_0 \cap \partial D_1 = \varnothing$.


\subsection{}\label{corecore}\textbf{H-compatible splittings and H-cores}

Let $(M,g)$ be a connected compact oriented $3$-manifold whose boundary, if non-empty, is a stable minimal surface. In this subsection, we study generalized Heegaard splittings of some compact $3$-manifolds $N$ included in $(M,g)$, and whose boundary is also a stable minimal surface. 

In this subsection, we will consider $(W_0,W_1,H)$ a generalized Heegaard splitting of $(M,\partial_0M,\partial_1M)$ where $(\partial_0M,\partial_1M)$ is a given partition of the components of $\partial M$. 

Consider a triplet $(W_0',W'_1,H')$ such that the following holds:
\begin{itemize}
\item the surface $H'$ is embedded inside the interior of $M$ and is obtained from $H$ by an isotopy of $M$ leaving $\partial M$ unchanged, 
\item $W'_0$, $W'_1$ are compression bodies included in $M$ so that $(W_0',W'_1,H')$ is a generalized Heegaard splitting of 
$$(W'_0 \cup W'_1, \partial_-W'_0, \partial_-W'_1),$$
\item $\partial_-W'_0$ and $\partial_-W'_1$ are two (not necessarily connected, possibly empty) oriented stable minimal surfaces.
\end{itemize}
Such a triplet $(W_0',W'_1,H')$ is then called an \textit{$H$-compatible splitting}. Note that $(W_0,W_1,H)$ is itself an $H$-compatible splitting. By the maximum principle, each component of $\partial_-W'_0 \cup \partial_-W'_1$ is either disjoint from $\partial M$ or entirely contained in it.

Given a metric $g$ on $M$, let $\mathcal{S}_g(H)$ be the set of subsets $N\subset M$ such that $N=W_0'\cup W'_1$ for a certain $H$-compatible splittings $(W_0',W'_1,H')$. The inclusion relation $\subset$ gives a partial order on $\mathcal{S}_g(H)$. An element $N$ is then minimal in the sense of this partial order (not to be confused with the minimality of the boundary $\partial N$) if there is no element $\tilde{N}$ such that $\tilde{N}\subset  N'$.

\begin{lemme} \label{max spheres} 
Let $(M,g)$ be as above and suppose that $H$ is an irreducible generalized Heegaard splitting. Then 
\begin{enumerate}
\item either $H$ is isotopic to a stable minimal surface, 
\item or $H$ is isotopic to the stable oriented double cover of a non-orientable minimal surface $\Sigma \subset M$ with a vertical handle attached, 
\item or for any $N\in\mathcal{S}_g(H)$, there exists a minimal element $N_{min}\in\mathcal{S}_g(H)$ such that 
$$N_{min}\subset  N.$$
\end{enumerate}

\end{lemme}

\begin{proof}

Let $N=N_0\in\mathcal{S}_g(H)$, and consider a sequence $\{N_i\}\subset \mathcal{S}_g(H)$ such that 
$$...\subset N_2 \subset N_1 \subset N_0 \text{ and}$$ 
\begin{equation*} \label{limvol}
\mathcal{H}^3(N_{i+1}) \leq  \inf\{\mathcal{H}^3(N') ; N' \subset N_i\}+\frac{1}{i+1}. 
\end{equation*}
We want to show that $\partial N_i$ converges subsequently to a minimal surface. By stability, \cite{Sc} gives a uniform upper bound $K$ on the second fundamental form of $\partial N_i$ and hence a uniform lower bound $\delta_0>0$ on the local focal distance of $\partial N_i$ (the supremum of the $d$ such that, given $\nu$ a unit normal of $S_i$, the map 
$$\partial N_i \times (-d,d)Ê\to M$$
$$(x,s) \mapsto \exp_x(s\nu)$$
is locally a diffeomorphism). It suffices now to rule out the possibility that the area $\mathcal{H}^2(\partial N_i)$ is unbounded. Note that in our case the sequence of stable minimal surfaces is nested, i.e. for each $\partial N_i$ the other $\partial N_j$ are on one side of $\partial N_i$. Moreover since $\mathcal{H}^3(N_{i})$ is a converging sequence, for any $\mu>0$, there exists an integer $i_0$ such that the volume of $N_i \backslash N_j$ is smaller than $\mu$ if $j\geq i \geq i_0$. Let $\nu$ be the unit normal of $\partial N_i$ pointing inside of $N_i$. If $\mu$ is chosen small compared to $K$ then for any $j\geq i_0$, $x\in \partial N_{i_0}$, $\{\exp_x(s\nu); s\in[0,\delta_0]\} \cap (\partial N_{i_0} \cup \partial N_j) \neq \varnothing$. So since $N_{i_0} \subset N_j $, $\partial N_j$ is locally a graph over $\partial N_{i_0}$ with bounded slope, and we conclude that the area $\mathcal{H}^2(\partial N_j)$ ($j\geq i_0$) is bounded in terms of $ \mathcal{H}^2(\partial N_{i_0})$, as desired. The sequence $\partial N_i$ converges subsequently to an embedded minimal surface $\Sigma$. When the limit of $\mathcal{H}^3(N_{i})$ is zero then either $\Sigma$ is an oriented stable minimal surface isotopic to $H$ or $\Sigma$ is a connected non-orientable minimal surface with stable double cover bounding a compression body on one side. Then by irreducibility, the generalized Heegaard splitting $H$ is isotopic to this double cover with a vertical handle attached \cite{Heath}. When $\lim_{i\to \infty}\mathcal{H}^3(N_{i})$ is not zero, $\Sigma$ bounds the desired minimal element $N_{min}$ of $\mathcal{S}_g(H)$.

\end{proof}

It will be convenient to introduce this definition:

\begin{definition} \label{coredef}
Let $M$ be as above and $H$ be a Heegaard splitting. An \textit{$H$-core} of $M$ is a minimal element $\mathfrak{C}$ of $\mathcal{S}_g(H)$. 

Let $C$ be a positive real number. A \textit{$C$-bounded $H$-core} of $M$ is an element $\mathfrak{C}$ of $\mathcal{S}_g(H)$ such that $\mathcal{H}^2(\partial \mathfrak{C})\leq C$, and which is minimal among elements $N$ of $\mathcal{S}_g(H)$ satisfying $\mathcal{H}^2(\partial N)\leq C$.

\end{definition}

We can reformulate Lemma \ref{max spheres} as follows:

\begin{coro} \label{existence core}

Let $(M,g)$ be as above, $H$ is an irreducible Heegaard splitting. Suppose that $H$ is 
\begin{itemize}
\item neither isotopic to a stable minimal surface 
\item nor isotopic to the stable oriented double cover of a non-orientable minimal surface $\Sigma \subset M$ with a vertical handle attached. 
\end{itemize}
Then there exists an $H$-core $\mathfrak{C}$ of $M$. 

Similarly for all $C>Area(\partial M)$, let $H$ be a strongly irreducible Heegaard splitting and suppose that $H$ is 
\begin{itemize}
\item neither isotopic to a stable minimal surface of area at most $C$
\item nor isotopic to the boundary of a tubular neighborhood of a non-orientable minimal surface $\Sigma \subset M$ with a vertical handle attached, where $\Sigma$ has stable oriented double cover and area at most $C$.
\end{itemize}
Then there exists a $C$-bounded $H$-core $\mathfrak{C}$ of $M$.

\end{coro}

\begin{proof}
For the fist part, it suffices to apply Lemma \ref{max spheres} with $N=M$.

The second part is easier to prove since the stable minimal surfaces in consideration have area bounded by $C$ and so it follows by usual convergence arguments \cite{Sc}.
\end{proof}

\section{Local min-max theory} \label{prelim}

\subsection {} \textbf{Min-max constructions in the Almgren-Pitts setting}

We first explain the local min-max theory in the Almgren-Pitts setting. Even though the min-max theorem of this subsection (Theorem \ref{discreteminmax}) will not be used for our main applications in Section \ref{Rubinstein conj}, the statement is similar to the main min-max result (Theorem \ref{smoothminmax}) in the next subsection and the proof is much easier. Moreover it will be used for proving Theorem \ref{boomchak2} in Section \ref{R>0}. For these reasons, we decide to present it here.

We give a review of the basic definitions from Geometric Measure Theory and some notions of the Almgren and Pitts' theory used thereafter. For a complete presentation, we refer the reader to the book of Pitts \cite{P} and to Section 2 in \cite{MaNeinfinity}.

Let $M$ be a closed connected Riemannian $(n+1)$-manifold, assumed to be isometrically embedded in $\mathbb{R}^P$. We work with the space $\mathbf{I}_k(M)$ of $k$-dimensional integral currents with support contained in $M$, the subspace $\mathcal{Z}_k(M) \subset \mathbf{I}_k(M)$ whose elements have no boundary, and with the space $\mathcal{V}_k(M)$ of the closure, in the weak topology, of the set of $k$-dimensional rectifiable varifolds in $\mathbb{R}^P$ with support in $M$.

An integral current $T\in \mathbf{I}_k(M)$ determines an integral varifold $|T|$ and a Radon measure $||T||$ (\cite[Chapter 2, 2.1, (18) (e)]{P}). If $V \in \mathcal{V}_k(M)$, denote by $||V||$ the associated Radon measure on $M$. Given an $(n+1)$-dimensional rectifiable set $U\subset M$, if the associated rectifiable current (oriented as $M$) is an integral current in $\mathbf{I}_{n+1}(M)$, it will be written as $[|U|]$. To a rectifiable subset $R$ of $M$ corresponds an integral varifold called $|R|$. 
The support of a current or a measure is denoted by $\spt$. 
The notation $\mathbf{M}$ stands for the mass of an element in $\mathbf{I}_k(M, \mathbb{Z})$. On $\mathbf{I}_k(M)$ there is also the flat norm $\mathcal{F}$ which induces the so-called flat topology. 
The space $\mathcal{V}_k(M)$ is endowed with the topology of the weak convergence of varifolds. The mass of a varifold is denoted by $\mathbf{M}$. The $\mathbf{F}$-metric was defined in \cite{P} and induces the varifold weak topology on any subset of $\mathcal{V}_k(M)$ with mass bounded by a constant.

We denote $[0,1]$ by $I$. For each $j\in \mathbb{N}$, $I(1,j)$ stands for the cell complex on $I$ whose $1$-cells and $0$-cells are, respectively,
$$[0,3^{-j}], [3^{-j},2.3^{-j}], ... , [1-3^{-j},1] \text{  and  } [0],[3^{-j}], ..., [1-3^{-j}],[1].$$
$I(1,j)_p$ denotes the set of all $p$-cells in $I(1,j)$.


\begin{definition}
Whenever $\phi : I(1,j)_0 \to \mathcal{Z}_n(M)$, we define the fineness of $\phi$ as
$$\mathbf{f}(\phi) = \sup \bigg\{ \frac{\mathbf{M}(\phi(x) - \phi(y))}{\mathbf{d}(x,y)} ; x,y \in I(1,j)_0,x\neq y\bigg\}$$
where $\mathbf{d}(x,y) = 3^j|x-y|.$
\end{definition}

For each $x\in I(1,j)_0$, define $\mathbf{n}(i,j)(x)$ as the unique element of $I(1,j)_0$ such that $$\mathbf{d}(x,\mathbf{n}(i,j)(x)) = \inf \{\mathbf{d}(x,y) ; y\in I(1,j)_0\}.$$ 
We can now define discrete sweepouts. Let $C_0,C_1\in\mathcal{Z}_n(M)$.

\begin{definition}
\begin{enumerate}

\item Let $\delta>0$. We say that $\phi_1$ and $\phi_2$ are homotopic in $(\mathcal{Z}_n(M),C_0,C_1)$ with fineness $\delta$ if and only if there exist positive integers $k_1$, $k_2$, $k_3$ and a map
$$\psi : I(1,k_3)_0 \times I(1,k_3)_0 \to \mathcal{Z}_n(M)$$
such that $\mathbf{f}(\psi)<\delta$ and whenever $j=1,2$ and $x\in I(1,k_3)_0$,
$$\phi_j : I(1,k_j) \to \mathcal{Z}_n(M), \quad \phi_j ([0])= C_0 \quad \phi_j([1]) =C_1,$$
$$\psi([j-1],x) = \phi_j(\mathbf{n}(k_3,k_j)(x)), \quad \psi(x,[0]) =C_0 \quad  \psi(x,[1]) = C_1.$$

\item A homotopy sequence of mappings into $(\mathcal{Z}_n(M),C_0,C_1)$ is a sequence $S=\{\phi_1,\phi_2,...\} $ for which there exist positive numbers $\delta_1,\delta_2...$ such that $\phi_i$ is homotopic to $\phi_{i+1}$ in $(\mathcal{Z}_n(M),C_0,C_1)$ with fineness $\delta_i$ for each $i\in \mathbb{N}^*$, $\lim_i\delta_i =0$ and 
$$\sup\{\mathbf{M}(\phi_i(x)) ; x \in \dmn(\phi_i), i\in \mathbb{N}\backslash\{0\}\} <\infty.$$

\item If $S_1=\{\phi_i^1\}$ and $S_2=\{\phi_i^2\}$ are homotopy sequences of mappings into $(\mathcal{Z}_n(M),C_0,C_1)$, then $S_1$ is homotopic with $S_2$ if and only if there is a sequence of positive numbers $\delta_1,\delta_2,...$ such that $\lim_i \delta_i =0$ and $\phi_i^1$ is homotopic to $\phi_i^2$ in $(\mathcal{Z}_n(M),C_0,C_1)$ with fineness $\delta_i$ for each positive integer $i$.

"To be homotopic with" is an equivalence relation on the set of homotopy sequences of mappings into $(\mathcal{Z}_n(M),C_0,C_1)$. An equivalence class of such sequences is called a homotopy class of mappings into $(\mathcal{Z}_n(M),C_0,C_1)$. The space of these equivalence classes is denoted by $$\pi_1^{\sharp}(\mathcal{Z}_n(M),C_0,C_1).$$

\end{enumerate}
\end{definition}

\begin{remarque} Compared with Pitts \cite{P}, we need a more flexible definition of sweepouts, whose ends are allowed to be arbitrary cycles in $\mathcal{Z}_n(M)$. This will enable to localize the min-max theory.

\end{remarque}


Let us define the local min-max width. Let $(M^{n+1},g)$ be a closed oriented manifold and let $N$ be a compact non-empty $(n+1)$-submanifold with boundary. Suppose that $\Gamma_0$ and $\Gamma_1$ are disjoint closed sets, $\Gamma_0 \cup \Gamma_1 = \partial N$, and $C_0$ (resp. $C_1$) is the cycle in $\mathcal{Z}_n(N)$ which is determined by $\Gamma_0$ (resp. $\Gamma_1$) with multiplicity one and with orientation given by the inward (resp. outward) unit normal. Given $\Pi \in \pi_1^{\sharp}(\mathcal{Z}_n(N),C_0,C_1)$, consider the function $\mathbf{L}: \Pi \to [0,\infty]$ defined such that if $S=\{\phi_i\}_{i \in \mathbb{N}} \in \Pi$, then 
$$\mathbf{L}(S) = \limsup_{i\to \infty} \max\{\mathbf{M}(\phi_i(x)) ; x \in \dmn(\phi_i)\}.$$
The width of $\Pi$ is then the following quantity:
$$\mathbf{L}(\Pi) = \inf\{\mathbf{L}(S) ; S \in \Pi\}.$$
A sequence $S=\{\phi_i :I(1,n_i) \to \mathcal{Z}_n(N)\}_{i} \in \Pi$ is said to be critical for $\Pi$ if $\mathbf{L}(S)=\mathbf{L}(\Pi)$. Define the critical set $\mathbf{C}(S) \subset \mathcal{V}_n(N)$ of $S \in \Pi$ as
\begin{align*}
\mathbf{C}(S) = \{V ;  \quad& \exists \{i_j\}_j, \exists \{x_j\}, i_j \to \infty, x_j \in \dmn(\phi_{i_j}) ,\\  
& V = \lim\limits_{j\to \infty} |\phi_{i_j}(x_j)| \text{ and } ||V||(N)=\mathbf{L}(S)\}.
\end{align*}

In \cite{Alm1}, Almgren describes how to associate to a map $\phi: I(1,j)_0 \to \mathcal{Z}_n(M)$, with fineness small enough, an element of $\mathbf{I}_{n+1}(M)$. Let us explain this construction. There is a number $\mu>0$ such that if $T\in \mathbf{I}_n(M)$ has no boundary and $\mathcal{F}(T)\leq \mu$, then there is an $S\in \mathbf{I}_{n+1}(M)$ such that $\partial S =T$ and $$\mathbf{M}(S)=\mathcal{F}(T)=\inf\{\mathbf{M}(S') ; S'\in \mathbf{I}_{n+1}(M) \text{ and } \partial S' = T \}.$$
Such an $S$ is called an $\mathcal{F}$-isoperimetric choice for $T$. A chain map 
$$\Phi : I(1,j)\to \mathbf{I}_*(M)$$
of degree $n$ is a graded homomorphism $\Phi $ of degree $n$, such that $\partial \circ \Phi = \Phi \circ \partial$ and $\Phi(\alpha)$ is $\mathcal{F}$-isoperimetric for $\Phi(\partial \alpha)$ where $\alpha \in I(1,j)_1$. Now let $\phi: I(1,j)_0 \to \mathcal{Z}_n(M)$, be a map with fineness smaller than $\mu$. There is a chain map $\Phi : I(1,j)\to \mathbf{I}_*(M)$ such that 
$$\Phi(x)=\phi(x) \quad \forall x \in I(1,j)_0.$$
Consider the $(n+1)$-dimensional integral current
\begin{equation} \label{Almgren}
\sum_{\alpha \in I(1,j)_1} \Phi(\alpha).
\end{equation}
By the interpolation formula of \cite[Section 6]{Alm1}, this sum is invariant by homotopies with fineness smaller than $\mu$. Hence when $\Pi \in \pi_1^{\sharp}(\mathcal{Z}_n(N),C_0,C_1)$ and $\{\phi_i\}\in\Pi$, the map which associates to $\Pi$ the $(n+1)$-dimensional current (\ref{Almgren}), defined with $\phi_i$ for $i$ sufficiently large, is well defined. We call this map the Almgren map and we denote it by
$$\mathcal{A} :  \pi_1^{\sharp}(\mathcal{Z}_n(N),C_0,C_1) \to \mathbf{I}_{n+1}(N).$$

\begin{remarque}
 
\item We will use interpolation results which make it possible to get discrete sweepouts from maps continuous in the flat topology and vice-versa. These results were proved in \cite[Sections 13-14]{MaNeWillmore}, were extended in \cite{Zhou} and \cite{Zhou2} confirmed that the technical condition "no mass concentration" was actually superfluous. 

\end{remarque}

The mean curvature of a surface endowed with the outward normal $\nu$ will be said to be positive when the mean curvature vector is a negative multiple of $\nu$. Piecewise smooth mean convexity was defined in \cite[Definition 10]{Antoine}. In what follows, given a set $E$, its closure in the ambient space will be denoted by $\bar{E}$. Its topological boundary will be $\partial E$. When $E$ is an open set with regular boundary, $\partial E$ inherits a natural orientation and outward normal $\nu$. The metric $g$ is said to be bumpy if no smooth immersed closed minimal hypersurface has a non-trivial Jacobi vector field. White showed that bumpy metrics are generic in the Baire sense \cite{Whitebumpy,Whitebumpy2}.

\begin{theo} \label{discreteminmax}

Let $(M^{n+1},g)$ be a closed oriented manifold endowed with a bumpy metric $g$ and $2\leq n \leq 6$. Let $N$ be a compact non-empty $(n+1)$-submanifold of $M$ whose boundary components are either piecewise smooth mean convex or minimal hypersurfaces. Suppose that $\Gamma_0$ and $\Gamma_1$ are disjoint closed sets, $\Gamma_0 \cup \Gamma_1 = \partial N$, and $C_0$ (resp. $C_1$) is the cycle in $\mathcal{Z}_n(N)$ which is determined by $\Gamma_0$ (resp. $\Gamma_1$) with multiplicity one and with orientation given by the inward (resp. outward) unit normal.

Suppose that the homotopy class $\Pi \in \pi^\sharp (\mathcal{Z}_n(N), C_0,C_1)$ satisfies
$$ \mathbf{L}(\Pi) >\max (\mathbf{M}(C_0) , \mathbf{M}(C_1)\}.$$
Then there exists a stationary integral varifold $V$ whose support is a smooth embedded minimal hypersurface $\Sigma$ of index bounded by one, such that
$$||V||(N)= \mathbf{L}(\Pi).$$ Moreover one of the component of $\Sigma$ is contained in the interior $\interior(N) $. 
\end{theo}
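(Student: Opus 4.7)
The plan is to adapt the standard Almgren--Pitts min-max construction to the localized setting of the domain $N$, with three additional ingredients beyond the usual arguments: a boundary-adapted pull-tight, a regularity analysis identifying the contribution of $\partial N$ to the critical varifold, and a new argument producing an interior component. The index bound follows from a one-parameter Marques--Neves type deformation.

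\textbf{Step 1: Pull-tight and regularity in $N$.} Starting from any critical sequence $S = \{\phi_i\} \in \Pi$, I would run a pull-tight using flows of vector fields $X$ on $M$ satisfying $\langle X, \nu \rangle \leq 0$ on $\partial N$, so that the flowed slices remain inside $N$. This produces a critical sequence whose critical set $\mathbf{C}(S)$ consists of varifolds $V$ with $\spt V \subset N$ and stationary under all such $X$. By the first variation formula combined with the mean convexity of $\partial N$ and White's strong maximum principle, the intersection $\spt V \cap \partial N$ must be a union of entire minimal components of $\partial N$. The Pitts combinatorial almost-minimizing argument then supplies $V \in \mathbf{C}(S)$ that is almost minimizing in annuli at every point of $\interior(N)$; by the Schoen--Simon regularity theory, valid for $2 \leq n \leq 6$, $\spt V \cap \interior(N)$ is a smooth embedded minimal hypersurface.

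\textbf{Step 2: Interior component.} This is the key new step, and the main obstacle in the proof. Suppose for contradiction that $\spt V \subset \partial N$, so $V = \sum_k m_k |\Sigma_k|$ where each $\Sigma_k$ is a connected minimal component of $\partial N$. I plan to construct a competitor $\{\tilde\phi_i\} \in \Pi$ satisfying
\[ \limsup_{i\to\infty} \max_x \mathbf{M}(\tilde\phi_i(x)) \leq \max\{\mathbf{M}(C_0),\mathbf{M}(C_1)\}, \]
directly contradicting the hypothesis $\mathbf{L}(\Pi) > \max\{\mathbf{M}(C_0), \mathbf{M}(C_1)\}$. The bumpy metric hypothesis implies every $\Sigma_k$ has trivial Jacobi field, so a standard perturbation of the normal exponential map yields a foliation, on the $N$-side of each $\Sigma_k$, by surfaces of strictly decreasing area. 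Using these foliations together with a cutoff interpolating to $C_0$ and $C_1$ near the endpoints of the parameter interval, one can replace each near-maximal-mass slice by a slice concentrated in a collar of $\partial N$; the catenoid estimate of Ketover--Marques--Neves handles the multiplicity case $m_k \geq 2$. The delicate point is to arrange all of this while keeping the modified maps inside the discrete homotopy class $\Pi$ and preserving the prescribed boundary cycles.

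\textbf{Step 3: Index bound.} Finally, I apply the Marques--Neves deformation argument adapted to a one-parameter family with boundary conditions: if the interior minimal hypersurface had index $\geq 2$, a two-parameter family of ambient deformations decreasing mass in a neighborhood of $\Sigma$ while preserving the cycles $C_0$ and $C_1$ would yield a tighter sweepout in $\Pi$, contradicting criticality. The bumpy assumption guarantees that no degenerate Jacobi field obstructs this deformation, giving the stated index bound of at most one.
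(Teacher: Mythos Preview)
Your outline diverges from the paper's argument, and Step 2 contains a genuine gap. You assert that, since $g$ is bumpy, each minimal boundary component $\Sigma_k$ admits ``a foliation, on the $N$-side of $\Sigma_k$, by surfaces of strictly decreasing area''. Bumpiness only tells you that $\Sigma_k$ has no nontrivial Jacobi field, i.e.\ it is either strictly stable or strictly unstable. If $\Sigma_k$ is strictly \emph{stable}, then pushing it normally into $N$ \emph{increases} area at second order, and no area-decreasing one-sided foliation exists. The catenoid estimate you invoke for the higher-multiplicity case likewise requires the double cover to be unstable. So your competitor construction cannot handle a critical varifold of the form $V=\sum_k m_k|\Sigma_k|$ with some $\Sigma_k$ stable, and this is exactly the situation the theorem must cover (cf.\ Lemma~\ref{canapply1}, where $\Gamma_0$ is assumed stable). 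Your target inequality $\limsup_i \max_x \mathbf{M}(\tilde\phi_i(x)) \le \max\{\mathbf{M}(C_0),\mathbf{M}(C_1)\}$ is also stronger than needed and not obviously attainable; what you actually want is a sweepout in $\Pi$ with $\mathbf{L}$ strictly below $\mathbf{L}(\Pi)$, but even that fails for the reason above.

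The paper avoids this difficulty by a short reduction that you should compare with your plan. Rather than working with a constrained pull-tight in $N$, one perturbs the domain: write $\partial N = \Upsilon_s \cup \Upsilon_u \cup (\text{mean convex part})$ with $\Upsilon_s$ the stable minimal components and $\Upsilon_u$ the unstable ones, and set
\[
N_\delta \;=\; \bigl(N \cup \{d(\cdot,\Upsilon_s)\le \delta\}\bigr)\setminus \{d(\cdot,\Upsilon_u)<\delta\}.
\]
For $\delta$ small and $g$ bumpy, $N_\delta$ is \emph{strictly} mean convex and any closed minimal hypersurface in $N_\delta$ actually lies in $N$; moreover $\mathbf{L}(\Pi_\delta)=\mathbf{L}(\Pi)$. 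One can then quote the standard min-max package (Marques--Neves, Zhou) directly in $N_\delta$ to get $\Sigma$ and the index bound $\le 1$ from \cite[Theorem~1.7]{MaNe}. The interior component now comes for free from that same index theorem: the only boundary pieces of $N$ that survive inside $N_\delta$ are the stable two-sided surfaces $\Upsilon_s$, and since the min-max varifold must carry index one, it cannot be supported entirely on $\Upsilon_s$; hence some component lies in $\interior(N)$. This replaces your Steps~1--2 entirely and makes Step~3 a citation rather than a construction.
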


\begin{proof}
Let $\Upsilon_u$ (resp. $\Upsilon_s$) be the union of minimal hypersurfaces in $\partial N$ which are unstable (resp. stable). Since the metric is bumpy, we can find a small $\delta>0$ so that 
$N_\delta := (N \cup \{x\in M; d(x,\Upsilon_s)\leq \delta\}) \backslash \{d(x,\Upsilon_u)<\delta\}$
is a strictly mean convex domain and if a closed minimal hypersurface is contained in $N_\delta$ then it is contained in $N$. The homotopy class $\Pi$ naturally induces another homotopy class $\Pi_\delta$ associated with $N_\delta$. It is not difficult to check that for $\delta$ small, $\mathbf{L}(\Pi_\delta)=\mathbf{L}(\Pi)$. We fix such a $\delta>0$. As in \cite[Theorem 2.1]{MaNe}, \cite[Theorem 2.7]{Zhou}, we get the existence of $\Sigma$ and the index bound follows from Theorem 1.7 in \cite{MaNe} (which is applicable in our context with boundary). The same result implies that at least one component of the min-max surface is inside $\interior(N)$, because the elements of $\Upsilon_s$ are stable two-sided. 

\end{proof}

The next lemma gives a criterion to apply the previous theorem. It was essentially proved in \cite{MorganRos} but we give another proof for the sake of completeness.

\begin{lemme} \label{canapply1}
Assume that ${N}\subset M^{n+1}$, g, $\Gamma_0$, $\Gamma_1$, $C_0$, $C_1$ are as in Theorem \ref{discreteminmax}. Consider ${\Pi} \in  \pi^\sharp (\mathcal{Z}_n(N), C_0,C_1)$ such that $\mathcal{A}({\Pi}) =[|N|]$. Suppose that $\Gamma_0$ is a stable minimal hypersurface. Then 
$$ \mathbf{L}(\Pi) > \mathbf{M}(C_0).$$
Consequently, if $\Gamma_0$ and $\Gamma_1$ are both stable minimal hypersurfaces, then the conclusion of Theorem \ref{discreteminmax} holds.
\end{lemme}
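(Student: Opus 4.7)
I would argue by contradiction. Suppose $\mathbf{L}(\Pi) = \mathbf{M}(C_0) = |\Gamma_0|$, and pick a critical sequence $\{\phi_i\}_i \in \Pi$ with $\limsup_i \max_x \mathbf{M}(\phi_i(x)) = |\Gamma_0|$ and fineness $\delta_i \to 0$. Using the Marques--Neves--Zhou interpolation theorems recalled in the excerpt, I replace each $\phi_i$ by a flat-continuous sweepout $\{T^i_s\}_{s\in[0,1]}$ from $C_0$ to $C_1$ with $\limsup_i \sup_s \mathbf{M}(T^i_s) = |\Gamma_0|$. The hypothesis $\mathcal{A}(\Pi) = [|N|]$ is equivalent to the existence of a flat-continuous family of open regions $\Omega^i_s \subset N$ satisfying $T^i_s - C_0 = \partial [|\Omega^i_s|]$, with $\Omega^i_0 = \emptyset$ and $\Omega^i_1 = N$.

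The bumpy metric assumption makes $\Gamma_0$ strictly stable, so its Jacobi operator $L = \Delta + |A|^2 + \Ric(\nu,\nu)$ has first eigenvalue $\lambda_1 > 0$ with positive first eigenfunction $u$. I would use $u$ together with the implicit function theorem to build a smooth one-sided foliation $\{\Sigma_t\}_{t \in [0,\epsilon_0]}$ of a tubular neighborhood $V$ of $\Gamma_0$ in $N$, with $\Sigma_0 = \Gamma_0$, whose leaves have strictly positive mean curvature pointing away from $\Gamma_0$ for $t > 0$, and satisfying the quantitative second-variation bound $\mathbf{M}(\Sigma_t) \geq |\Gamma_0| + c_1 t^2$ for small $t > 0$, for some $c_1 > 0$. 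Write $V_t$ for the region bounded by $\Gamma_0$ and $\Sigma_t$.

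Fix a small $t_* \in (0, \epsilon_0)$. For each $i$, since $s \mapsto \mathrm{vol}(\Omega^i_s \cap V_{t_*})$ is continuous and ranges from $0$ to $\mathrm{vol}(V_{t_*})$, one finds $s_i$ with $\mathrm{vol}(\Omega^i_{s_i} \cap V_{t_*}) = \mathrm{vol}(V_{t_*})/2$. Applying the coarea formula along the $t$-coordinate of the foliation to $\Omega^i_{s_i} \cap V_{t_*}$, combined with the mean-convex barrier property of the leaves $\Sigma_t$ (which allows one to replace the portion of $T^i_{s_i}$ inside $V_{t_*}$ by a subset of a leaf of the foliation without increasing mass), I would obtain that some leaf $\Sigma_{t_0}$ with $t_0 \in [c_2 t_*, (1-c_2)t_*]$ is essentially fully enclosed by $\Omega^i_{s_i}$, giving
$$ \mathbf{M}(T^i_{s_i}) \;\geq\; \mathbf{M}(\Sigma_{t_0}) - o_i(1) \;\geq\; |\Gamma_0| + c_1(c_2 t_*)^2 - o_i(1). $$
Letting $i \to \infty$ then contradicts $\sup_s \mathbf{M}(T^i_s) \to |\Gamma_0|$, proving $\mathbf{L}(\Pi) > \mathbf{M}(C_0)$.

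The main obstacle is the slicing-plus-barrier step: extracting a leaf of the foliation that is almost fully enclosed by $\Omega^i_{s_i}$ from the sole volume condition $\mathrm{vol}(\Omega^i_{s_i} \cap V_{t_*}) = \mathrm{vol}(V_{t_*})/2$, and upgrading this into a mass lower bound for $T^i_{s_i}$, requires careful use of the mean-convex barrier provided by the leaves $\Sigma_t$, in order to rule out pathological slices (e.g.\ narrow tubes) that could a priori realize the half-volume condition with very little mass. For the final sentence of the lemma, if both $\Gamma_0$ and $\Gamma_1$ are stable minimal, applying the first part to $\Gamma_0$ and, reversing the sweepout to swap the roles of $C_0$ and $C_1$, also to $\Gamma_1$, yields $\mathbf{L}(\Pi) > \max(\mathbf{M}(C_0), \mathbf{M}(C_1))$, which is exactly the hypothesis of Theorem \ref{discreteminmax}.
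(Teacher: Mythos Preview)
Your overall strategy---exploit strict stability of $\Gamma_0$ to produce a quantitative mass barrier for cycles that have begun to move away from $\Gamma_0$---is the same one the paper uses, and your deduction of the final sentence (reverse the sweepout to handle $\Gamma_1$) is correct.

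The genuine gap is precisely the slicing-plus-barrier step you flag. The half-volume condition $\mathrm{vol}(\Omega^i_{s_i}\cap V_{t_*})=\tfrac12\mathrm{vol}(V_{t_*})$ does not by itself force a leaf $\Sigma_{t_0}$ to be ``essentially fully enclosed'': the region $\Omega^i_{s_i}\cap V_{t_*}$ could be sliced \emph{vertically} (taking half of $\Gamma_0$ times the full interval) rather than horizontally, and coarea only tells you the average slice has half the leaf area. Your sentence about ``replacing the portion of $T^i_{s_i}$ inside $V_{t_*}$ by a subset of a leaf without increasing mass'' is where this goes wrong: since the leaves have \emph{increasing} area, the area-nonincreasing projection along the foliation goes \emph{toward} $\Gamma_0$, not outward to a leaf $\Sigma_{t_0}$; so you cannot reach the displayed inequality this way. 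What you actually need is a relative isoperimetric statement in the slab (sub-slabs $V_{t_0}$ minimize a suitable relative perimeter among regions of given volume), and proving that is essentially equivalent to invoking the Marques--Neves area-decreasing retraction---which is what the paper does directly.

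The paper's route sidesteps volume entirely. It fixes a small $\epsilon_0$ and proves the claim: every integral cycle $T$ with $|T|$ in the $\mathbf{F}$-annulus $\mathbf{B}^{\mathbf{F}}_{2\epsilon_0}(|\Gamma_0|)\setminus\mathbf{B}^{\mathbf{F}}_{\epsilon_0}(|\Gamma_0|)$ has $\mathbf{M}(T)\geq\mathbf{M}(C_0)+\delta$ for a fixed $\delta>0$. Since $\mathcal{A}(\Pi)=[|N|]$ forces any sweepout to pass through this annulus (the $\mathbf{F}$-distance to $|\Gamma_0|$ starts at $0$ and must become large), the width exceeds $\mathbf{M}(C_0)$. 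The claim is proved by contradiction using two ingredients absent from your outline: (i) for a putative sequence $T_i$ with $|T_i|\in A$ and small mass excess, minimize the portion of $T_i$ outside a tube $\Omega_{r'}$ to obtain $T_i'$ supported in $\Omega_r$ with no larger mass; (ii) use the area-decreasing maps $P_t$ of \cite[Proposition~5.7]{MaNeindexbound} together with the constancy theorem to get $(P_1)_\sharp T_i'=\pm C_0$, hence $\mathbf{M}(C_0)\leq\mathbf{M}(T_i')\leq\mathbf{M}(T_i)$, and then to force $|T_i'|\to|\Gamma_0|$ and $\mathbf{F}(|T_i|,|T_i'|)\to0$, contradicting $|T_i|\in A$. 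The payoff of tracking $\mathbf{F}$-distance rather than enclosed volume is that the retraction gives a direct comparison between $\mathbf{M}(T)$ and $\mathbf{M}(C_0)$ without any control on how $\Omega^i_{s_i}$ sits inside the slab.
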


\begin{proof}

Denote by $|\Gamma_0|$ the varifold determined by $\Gamma_0$ with multiplicity one. Since any homotopy sequence $\{\psi_i\}\in \Pi$ sweeps out $N$ non trivially, for all $\epsilon_0>0$ small enough there is an element $x\in I(1,k_i)_0$ for which $\mathbf{F}(|\psi_i(x)|, |\Gamma_0|) \in (\epsilon_0,2\epsilon_0)$ when $i$ is large enough. Let $\mathbf{B}^{\mathbf{F}}_{\epsilon}(|\Gamma_0|)$ be the $\mathbf{F}$-ball of radius $\epsilon$ in $\mathcal{V}_2(M)$ centered at $|\Gamma_0|$.

\textbf{Claim:} For $\epsilon_0$ small enough, there is a $\delta>0$ such that for any integral cycle $T\in\mathcal{Z}_2(M)$ satisfying 
$$|T|\in A:= \mathbf{B}^{\mathbf{F}}_{2\epsilon_0} (|\Gamma_0|)\backslash \mathbf{B}^{\mathbf{F}}_{\epsilon_0}(|\Gamma_0|),$$
we have $\mathbf{M}(T) \geq \mathbf{M}(C_0) +\delta$.

The lemma readily follows from this claim. To prove the latter, we argue by contradiction and consider a sequence of cycles $T_i\in\mathcal{Z}_2(M)$ with $|T_i|\in A$, and a sequence of positive numbers $\delta_i$ going to zero such that $ \mathbf{M}(T_i) \leq \mathbf{M}(C_0) +\delta_i$. Let $\Omega_r$ be an $r$-neighborhood of $\Gamma_0$ so that there is a family of area-decreasing maps $\{P_t\}_{t\in[0,1]}$ as in \cite[Proposition 5.7]{MaNeindexbound}. Note that $||T_i||(M\backslash \Omega_{r/2})$ is smaller than $\kappa.\epsilon_0$ where $\kappa=\kappa(\Gamma_0,r)\geq 1$ is a constant. By the properties of $\Omega_r$, if we fix $\epsilon_0$ small enough then for any integral cycle $\hat{T}$ with $|\hat{T}| \in \mathbf{B}^{\mathbf{F}}_{(1+2\kappa)\epsilon_0}(|\Gamma_0|)$ and support in $\bar{\Omega}_r$ we have by the constancy theorem:
\begin{equation} \label{constancy}
\quad (P_1)_\sharp \hat{T} = \pm C_0 \text{ and }\mathbf{M}(C_0)\leq \mathbf{M}(\hat{T}).
\end{equation}
For almost all $r'\in(r/2,r)$, we can minimize the part of $T_i$ outside $\Omega_{r'}$, by the monotonicity formula (fix $\epsilon_0$ small) we get an integral cycle $T_i'$ coinciding with $T_i$ inside $\Omega_{r'}$ but area-minimizing outside $\bar{\Omega}_{r'}$, and satisfying 
$$\spt(T_i') \subset \Omega_r \text{ and } \mathbf{M}(T_i')\leq \mathbf{M}(T_i).$$ 
By the choice of the sequence $T_i$, 
\begin{equation} \label{intuitif}
\mathbf{M}(C_0)\leq \mathbf{M}(T_i')\leq \mathbf{M}(T_i) \leq \mathbf{M}(C_0) +\delta_i.
\end{equation}
Note that by construction $\mathbf{F}(|T_i|,|T_i'|) \leq 2 ||T_i||(M\backslash \Omega_{r'})$. Since $\delta_i$ goes to zero, $r'=r'(i)\in(r/2,r)$ can be chosen so that the mass $ ||T_i||(M\backslash \Omega_{r'}) $ also converges to zero. Indeed, either $||T_i||(\Omega_{r}\backslash\Omega_{r/2})$ goes to zero or not. In the first case, let $f$ be the function defined before Proposition 5.7 in \cite{MaNeindexbound}. By the coarea formula, we find $r'\in(r/2,r)$ such that $\langle T_i, f, r'\rangle $ is an integral current with arbitrarily small mass: consequently $||T_i'||(M) -||T_i||(\Omega_{r'})$ is arbitrarily small. Then $||T_i||(\Omega_{r'})$ is arbitrarily close to $\mathbf{M}(C_0)$ because the integral cycle $T_i'$ satisfies (\ref{intuitif}). Since $\mathbf{M}(T_i) = ||T_i||(\Omega_{r'}) +||T_i||(M\backslash \Omega_{r'})$, it forces $||T_i||(M\backslash\Omega_{r'})$ to go to zero too. In the second case, namely if $||T_i||(\Omega_{r}\backslash\Omega_{r/2})$ does not go to zero, we choose $r'$ tending to $r$ as $i\in \infty$, such that $||T'_i||(\Omega_{r}\backslash\Omega_{r/2})$ is also bounded away from zero (for a subsequence in $i$). Then by the computation in the proof of \cite[Proposition 5.7]{MaNeindexbound} and supposing (\ref{intuitif}) true, the derivative of $(P_t)_\sharp|T_i'|$ is uniformly bounded above by a negative constant for $t\in[0,t_0]$ where $t_0 >0$ is independent of $i$. This contradicts the upper bound in (\ref{intuitif}). To sum up, we just showed that for a certain choice of $r'=r'(i)$, we have $\lim \mathbf{F}(|T_i|,|T_i'|)=0$.

Besides, by compactness for integral cycles, $T_i'$ converges subsequently to a current $T'_\infty$ in the flat topology with mass at most $\mathbf{M}(C_0)$ and $(P_1)_\sharp T'_i$ is equal to $\pm C_0$ for $i$ large, by (\ref{constancy}). Hence, since $T'_\infty$ has support in $\Gamma_0$ by \cite[Proposition 5.7, (iv)]{MaNeindexbound}, $T'_\infty=\pm C_0$ and there is no loss of mass so $|T'_i|$ converges to $|\Gamma_0|$ as varifolds. In conclusion, $\mathbf{F}(|T_i'|,|\Gamma_0|)$ goes to zero, and 
$$\mathbf{F}(|T_i|,|\Gamma_0|) \leq \mathbf{F}(|T_i|,|T_i'|)+ \mathbf{F}(|T_i'|,|\Gamma_0|)$$
also converges to zero, contradicting the fact that $T_i \in A$.

\end{proof}

\subsection{}\textbf{Min-max constructions in the Simon-Smith setting}

Let $N$ be an oriented connected compact $3$-manifold possibly with boundary, subset of a closed oriented manifold $(M,g)$. The surfaces considered in this subsection are all embedded. Since $M$ is oriented, we will only consider smooth sweepouts $\{\Sigma_t\}$ where all the slices are oriented. If $\Sigma$ is a surface, we denote by $|\Sigma|$ the varifold associated to $\Sigma$ with multiplicity one. 

\begin{definition} \label{defsmoothsweep}
Let $\{\Sigma_t\}_{t\in[a,b]}$ be a family of oriented closed surfaces in $N$. We say that $\{\Sigma_t\}$ is a  \textit{smooth sweepout} if 
\begin{enumerate}
\item for all $t\in [a,b]$, $\Sigma_t$ is a smooth surface in the interior of $N$,
\item $\Sigma_t$ varies smoothly in $t\in (a,b)$,
\item there is a partition $(A,B)$ of the components of $\partial N$ such that, $\Sigma_a=A$, $\Sigma_b=B$, and $|\Sigma_t|$ converges to $|\Sigma_a|$ (resp. $|\Sigma_b|$) in the $\mathbf{F}$-norm as $t\to a$ (resp. $b$).
\end{enumerate}
\end{definition}


Let $\Pi$ be a collection of smooth sweepouts parametrized by $[0,1]$. Denote by $\Diff_0$ the set of diffeomorphisms of $N$ isotopic to the identity map and leaving the boundary fixed. The set $\Pi$ is saturated if for any map $\psi\in C^\infty([0,1]\times N, N)$ such that $\psi(t,.)\in \Diff_0$ for all $t$ and $\psi(0,.)=\psi(1,.)=\Id$, and for any $\{\Sigma_t\}_{[0,1]}\in \Pi$, we have $\{\psi(t,.)(\Sigma_t)\}_{[0,1]}\in \Pi$. We say that $\Pi$ is generated by a family of smooth sweepouts $\{\{\Sigma^j_t\}_{t\in [0,1]}\}_{j\in J}$ if $\Pi$ is the smallest saturated set containing $\{\{\Sigma^j_t\}_{t\in [0,1]}\}_{j\in J}$. The \textit{width of $N$ associated with $\Pi$} in the sense of Simon-Smith is defined to be 
$$W(N,\Pi) =\inf_{\{\Sigma_t\} \in \Pi}\sup_{t\in[0,1]} \mathcal{H}^2(\Sigma_t).$$
Given a sequence of smooth sweepouts $\{\{\Sigma^i_t\}\}_i\subset \Pi$, denote by $\mathbf{\Lambda}(\{\{\Sigma^i_t\}\}_i)$ the set 
\begin{align*}
\{V\in \mathcal{V}_2(M) ;\quad & \exists  \{i_j\} \to \infty, t_{i_j}\in [0,1] \\ 
\text{ such that }& \lim_{j\to \infty} \mathbf{F}(|\Sigma^{i_j}_{t_{i_j}}|, V) =0|) \}.
\end{align*}
The sequence $\{\Sigma^i_t\} \in\Pi$ is \textit{minimizing} if $\lim_{i\to \infty} \max_{t\in[0,1]}\mathcal{H}^2(\Sigma^i_t)=W(N,\Pi)$, it is \textit{pulled-tight} if moreover any element $V\in \mathbf{\Lambda}(\{\{\Sigma^i_t\}\}_i)$ with $||V||(M)=W(N,\Pi)$ is stationary. By \cite[Proposition 4.1]{C&DL}, it is always possible to deform $\{\Sigma^i_t\} \in\Pi$ into a pulled-tight sequence.

The Almgren map $\mathcal{A}$ defined in the previous subsection can also be defined on the family of smooth sweepouts of $M$. The smooth sweepouts we consider have oriented slices so by choosing a continuous orientation, it determines a family of integral cycles continuous in the flat topology. Then by \cite{Alm1}, one can associate to it a $3$-dimensional  integral current $\mathcal{A}(\{\Sigma_t\})$.


The following theorem will be crucial for proving Rubinstein's conjecture. The point is to show that in the setting of Simon-Smith, under certain conditions, local min-max gives a minimal surface \textit{inside the interior} of the connected domain $N$.

\begin{theo} \label{smoothminmax}
Consider an oriented connected compact $3$-manifold $N\subset (M,g)$ possibly with boundary. Let $(W_0,W_1,H)$ be a generalized Heegaard splitting of $(M,\partial_0 M, \partial_1 M)$, where $(\partial_0 M, \partial_1 M)$ is a partition of the components of $\partial M$. Suppose that $(W_0', W_1',H')$ is an $H$-compatible splitting with $N=W_0' \cup W_1'$. We write $\partial_- W_0' =\Gamma_0$ and $\partial_+W_1' = \Gamma_1$.
Let $\Pi$ be a saturated set generated by all smooth sweepouts $\{\Sigma_t\}$ such that 
\begin{itemize}
\item $\mathcal{A}(\{\Sigma_t\})=[|N|]$,
\item for $i\in\{0,1\}$, $\Sigma_i = \Gamma_i $,
\item for all $t\in(0,1)$, the surface $\Sigma_t$ is isotopic to $H'$ inside $N$.
\end{itemize}
Suppose that the metric $g$ is bumpy, that $(W_0',W_1',H')$ is a strongly irreducible Heegaard splitting and that $\max \{\mathcal{H}^2(\Gamma_0), \mathcal{H}^2(\Gamma_1)\}< W(N,\Pi).$

Then there exists a min-max sequence $\Sigma^j_{t_j}$ converging to $\sum_{i=1}^k m_i\Sigma^\infty_i$ as varifolds, where $\Sigma^\infty_i\subset N$ are disjoint closed embedded connected minimal surfaces such that
$$ \sum_{i=1}^k m_i \mathcal{H}^2(\Sigma^\infty_i) = W(N,\Pi),$$
the index of $\bigcup_{i=1}^k \Sigma^\infty_i$ is bounded by one and there is an $i_0$ such that
$$\Sigma^\infty_{i_0}  \subset  \interior(N).$$
Moreover, the genus bound and the analysis of \cite{Ketgenusbound} hold true.
\end{theo}

\begin{proof}       
By definition of "$H$-compatible" (see Subsection \ref{corecore}), the boundary $\partial N = \Gamma_0\cup\Gamma_1$ is a stable minimal surface. Since the metric is bumpy, $\partial N$ is actually strictly stable and
\begin{equation} \label{boundtrue}
\max \{Area(\Gamma_0),Area(\Gamma_1)\}< W(N,\Pi).
\end{equation}
This follows from Lemma \ref{canapply1} and the general fact that by discretizing a smooth sweepout, the Almgren-Pitts width is at most the Simon-Smith width.

We can also find a small $\hat{\delta}>0$ so that 
$$N_{\hat{\delta}} := (N \cup \{x\in M; d(x,\partial N)\leq \hat{\delta}\}) $$
is a strictly mean convex domain and if a closed minimal surface is contained in $N_{\hat{\delta}}$ then it is contained in $N$. The saturated set $\Pi$ naturally induces a saturated set $\Pi_{\hat{\delta}}$ associated with $N_{\hat{\delta}}$. It is then not difficult to check that for $\hat{\delta}$ small, $W(N_{\hat{\delta}},\Pi_{\hat{\delta}})=W(N,\Pi)$. If $\hat{\delta}$ is chosen small enough, by (\ref{boundtrue}), we can apply the version of the Simon-Smith theorem proved in \cite[Theorem 2.1]{MaNe} to get the existence of the varifold $V=\sum_{i=1}^k m_i\Sigma^\infty_i$, then the genus bound and the nature of convergence follow from \cite{Ketgenusbound}. The index of the union $\bigcup_{i=1}^k\Sigma^\infty_i$ is bounded by one according to Theorem 6.1 and paragraph 1.3 in \cite{MaNeindexbound}.

The goal of the remaining of the proof is to show the existence of a component $\Sigma_{i_0}^\infty$ inside the interior $\interior(N)$. The arguments will share similarities with \cite[Deformation Theorem C]{MaNeindexbound} (in particular the several constructions in its proof), however we have to deal with smooth isotopies. On the other hand, in our case we only need to rule out the case where the whole min-max surface is included in the boundary. Let $\{\{\Sigma^i_t\}\}_i$ be a pulled-tight minimizing sequence and suppose by contradiction that for all $V\in \mathbf{\Lambda}(\{\{\Sigma^i_t\}\}_i)$ with smooth support and mass $W(N,\Pi)$, $\spt(V)$ is included in $\partial N$. 
Given a sweepout in $\Pi$, we orientate $\Sigma_t$ with the unit normal $\nu$ pointing towards $\Gamma_1$. In $N$, each $\Sigma_t$ hence bounds a manifold with boundary $B(\Sigma_t)$ such that $\nu$ is the outward normal.

If $S$ is a surface, let $|S|$ be the varifold it determines with multiplicity one. Denote by $S_1$, ..., $S_p$ the stable minimal components of $\Gamma_0\cup \Gamma_1$. Let $V$ be a varifold with mass $W(N,\Pi)$, of the form:
\begin{equation} \label{form}
V = m_1|S_1|+...m_p |S_p|,
\end{equation}
where $m_i$ are nonnegative integers. We call the finite family of such $V$ by $\hat{\mathcal{V}} $. Let $\{\Sigma_t\}_{t\in[0,1]}$ be a smooth sweepout in $\Pi$. We are applying the following discussion to the pulled-tight minimizing sequence $\{\{\Sigma^i_t\}\}_i$ so we are assuming $\{\Sigma_t\}$ to be one of these sweepouts. We can in particular make $\max_t\mathcal{H}^2(\Sigma_t)- W(N,\Pi)$ arbitrarily small. Given $\alpha>0$, consider $\mathbf{V}_{\alpha}$ the subset 
$$\mathbf{V}_{\alpha}:=\{t\in[0,1] ; \quad \exists V \in \hat{\mathcal{V}} , \quad\mathbf{F}(|\Sigma_t|, V) \leq \alpha\}.$$

The idea of the end of the proof is to construct from part of the sweepout $\{\Sigma_t\}$ another sweepout $\{\hat{\Sigma}_t\}$ (not necessarily homotopic to $\{\Sigma_t\}$) for which the corresponding $\mathbf{V}_{\eta}$ is empty, where $\eta>0$ is a constant depending on $\alpha$ and $N$. A technical point which is already in Claims 1-4 of the proof of \cite[Deformation Theorem C]{MaNeindexbound} is that we will make sure that the surfaces $\hat{\Sigma}_t$ are not close to any stationary integral varifold which was far from the surfaces $\Sigma_t$. We suppose that $\mathbf{V}_{\alpha}$ is non-empty (otherwise there is nothing to prove) and is a finite union of closed intervals. If $\alpha$ is sufficiently small, then for any $t\in \mathbf{V}_{\alpha}$, $\Sigma_t$ bounds $B(\Sigma_t)$ which has volume either close to $0$ or close to $\Vol(N)$. Let $[a_1,b_1]$,...,$[a_q,b_q]$ be the intervals in $\mathbf{V}_{\alpha}$ such that $B(\Sigma_t)$ has volume close to $0$, where $a_1\leq b_1<...<a_q \leq b_q$. We have $b_q<1$ since $\alpha$ is small enough. Similarly let $[c_1,d_1]$,...,$[c_{q'},d_{q'}]$ be the intervals composing $\mathbf{V}_{\alpha} \cap (b_q,1]$. On these intervals the volume of $B(\Sigma_t)$ is close to $\Vol(N)$. Then $\{\hat{\Sigma}_t\}$ will be constructed from the restriction $\{\Sigma_t\}_{t\in [b_q,c_1]}$ by appropriately closing its ends, i.e. by deforming $\Sigma_{b_q}$ and $\Sigma_{c_1}$ respectively to $\Gamma_0$ and $\Gamma_1$. This new sweepout $\{\hat{\Sigma}_t\}$ will not necessarily be homotopic to $\{\Sigma_t\}$ (contrarily to the analogue situation in \cite[Deformation Theorem C]{MaNeindexbound}).

We recall the following version ``with constraints'' of the $\gamma$-reduction of \cite{MSY} used in the min-max setting of Simon-Smith \cite{Smith} (see \cite[Section 7]{C&DL}). Consider $\tilde{\Sigma}$ a surface embedded in $\interior(N)\cap N_\delta$. Let $U$ be an open set included in $N$. Let $\mathfrak{Is}(U)$ be the set of isotopies of $N_\delta $ fixing $N_\delta\backslash U$, with parameter in $[0,1]$, and for $\mu>0$ define
$$\mathfrak{Is}_{\mu}(U) = \{\psi\in\mathfrak{Is}(U); \mathcal{H}^2(\psi(\tau,\tilde{\Sigma})) \leq \mathcal{H}^2(\tilde{\Sigma}) + \mu \text{ for all } \tau \in[0,1]\}.$$
An element of the above set is called a $\mu$-isotopy. Suppose that the sequence $\{\psi^k\}\subset \mathfrak{Is}_{\mu}(U)$ is such that 
$$\lim_{k\to\infty}\mathcal{H}^2(\psi^k(1,\tilde{\Sigma})) = \inf_{\psi\in\mathfrak{Is}_{\mu}(U)}\mathcal{H}^2(\psi(1,\tilde{\Sigma})).$$
Then in $U$, $\psi^k(1,\tilde{\Sigma})$ subsequently converges in the varifold sense to a smooth minimal surface $\tilde{\Gamma}$. 

Let $\Omega_r$ be an $r$-neighborhood of $S_1\cup ...\cup S_p \subset \Gamma_0\cup \Gamma_1$ so that there is a family of area-decreasing maps $\{P_t\}_{t\in[0,1]}$ as in \cite[Proposition 5.7]{MaNeindexbound} ($r>0$ is small). Let $\alpha_1>0$ be such that if for $V\in \hat{\mathcal{V}}$, a stationary integral varifold $Z$ has $\spt(Z)\subset \spt(V)$, $\mathbf{M}(Z)=\mathbf{M}(V)$ but $Z\neq V$, then there is a connected component $\Omega^Z$ of $\Omega_{r}$ so that $||Z||(\Omega^Z) > ||V||(\Omega^Z) +\alpha_1$. The $\alpha$ considered in the previous paragraph will then be taken independent of $\{\Sigma_t\}$ and at least small enough so that the following holds: if for $V\in \hat{\mathcal{V}}$, a varifold $Z'\in \mathcal{V}_2(M)$ (not necessarily stationary or integral) has $||Z'||(\Omega^{Z'}) > ||V||(\Omega^{Z'}) +\alpha_1$ for a connected component $\Omega^{Z'}$ of $\Omega_{r}$, then $\mathbf{F}(Z,V)>2\alpha$ (see \cite[Subsections 5.11 and 5.14]{MaNeindexbound}).
Let $\mu>0$ be a small positive constant which depends on $\{\Sigma_t\}$ and which will be determined later.

Notice (see Remark \ref{topolo} after this proof) that by strong irreducibility of $\Sigma_{b_q}$ in $N$, during a $\gamma$-reduction, there are a finite number of surgeries along curves and at each step, if it is done along an essential curve (in the surface of this step) then the surgery disk is on one well-determined side of $\Sigma_{b_q}$ independent of the surgery (the side of $\Gamma_0$ in the case when $B(\Sigma_{b_q})$ is close to $0$). Surgeries along non-essential curves can occur on both sides and split off spheres. Moreover the non-sphere components have multiplicity one. Let us apply the bounded path version of the $\gamma$-reduction to $\Sigma_{b_q}$, with $U=\interior(N_{\delta})\backslash \bar{\Omega}_{r/2}$, with $\mu$ small to be determined later. If $\alpha$ is small enough, then by the monotonicity formula the resulting limit varifold $V_1$ has support in ${\Omega}_r$. Hence $\Sigma_{b_q}$ can be deformed by a $\mu$-isotopy to a surface $A$, obtained by attaching to a closed surface $B \subset {\Omega}_r$ some thin handles (diffeomorphic to $S^1\times[0,1]$) and caps (diffeomorphic to disks), with $\mathcal{H}^2((A\backslash B) \cup (B\backslash A))$ arbitrarily small (see \cite[Lemma 7.1]{Montezuma1} an analogue lemma in the context of Almgren-Pitts theory). We push $B$ towards $\Gamma_0$ with $P_t$ , $0\leq t\leq t'$ with $t'$ very close to $1$.
Let $C$ be a surface obtained by $A$ from a $\mu$-isotopy, close to $P_{t'}(B)$ as above in the varifold sense. 

Let $\{X_t\}_{t\in[0,1]}$ denote the surfaces constructed by isotopies as above, with $X_0=\Sigma_{b_q}$ and $X_1=C$. Similarly to Claims 1 and 2 in the proof of \cite[Deformation Theorem C]{MaNeindexbound}, one can choose $\mu$ and $\{X_t\}$ so that the following holds: there is a positive $\eta$ independent of the index $i$ of $\{\Sigma^i_t\}$ and $\mu$ when they are respectively large and small enough, such that 
$$
\forall t\in[0,1] \quad \mathbf{F}(|X_t|,V') > \eta
$$
for all stationary integral varifold $V'$ with mass $W(N,\Pi)$.

We can reapply $\gamma$-reduction to $P_{t'}(B)$ in each connected component of $\Omega_r$. 
We get a limit varifold $V_2$ of the form (\ref{form}). Let $B^*$ (resp. $D$) be a surface deformed from $P_{t'} (B)$ (resp. $C$) by a $\mu$-isotopy, arbitrarily close in the varifold sense to $V_2$. Now write 
$$V_2=m_{2,1}|S_1|+...m_{2,p} |S_p|.$$
Suppose that some of the coefficients are strictly larger than one, say $m_{2,j}>1$ for $j$ in a subset $\mathcal{J}$ of $\{1,...,p\}$. The surface $S_j$ is a $2$-sphere for any $j\in \mathcal{J}$. We are going to further deform $D$, by a $\mu$-isotopy, into a surface $E$ arbitrarily close in the varifold sense to 
\begin{equation} \label{frog}
V_3= m_{3,1}|S_1|+m_{3,2}|S_2|+...m_{3,p} |S_p|,
\end{equation}
where each $m_{3,i}$ is either $0$ or $1$. To begin this $\mu$-isotopy, we need to prepare the surface $D$. Note that by Remark \ref{aboutMSY}, $B^*$ is isotopic inside $\Omega_r$ to the union of $\tilde{S}_i$ and some small area spheres bounding balls in $\Omega_r$, where
$$\tilde{S}_i := \bigcup_{r=1}^{m_{2,i}} \{x\in N ; \text{dist}(x,S_i) = \bar{\delta} \frac{r}{m_{2,i}} \},$$
where $\bar{\delta}>0$ is arbitrarily small so that 
$$\Gamma_0\times [0,2\bar{\delta}) \to N$$
$$(x,s)\to \exp_x(s\nu)$$
is a diffeomorphism on its image supposed to be contained in $\Omega_{r/2}$, with $\nu$ the inward unit normal of $\Gamma_0$ (recall that $\Gamma_0 \subset \partial N$). The surface $D$ is obtained from $B^*$ with some thin handles and caps attached. By "thin handles", we mean that they are close to curves with endpoints in $B^*$ in the Hausdorff distance, say. 
Note that we could prove that $B^*$ is $\mu$-isotopic to $\bigcup_i \tilde{S}_i$ plus some small area spheres for $\mu$ small by adapting the proof of \cite{MSY} but for simplicity here we will only need that it is isotopic to the union of $\bigcup_i\tilde{S}_i$ with small area spheres. Let $\varphi$ be that isotopy.
        
By the local version of Remark 3.27 in \cite{MSY} shown in \cite[Section 7]{C&DL} with the blow down/blow up argument, there is a radius $r_0$ depending only on $\Gamma_0$ so that $B^*$ is $\mu$-isotopic to a surface which, up to some small area spheres bounding balls in $\Omega_r$, coincides with $B^*$ outside of a ball $b_{2r_0}$ of radius $2r_0$ centered at a point of $S_j$, and inside the concentric ball of half radius $b_{r_0}$, the surface is a made of a family of embedded disks $\{d_1,d_2,...,d_{l_0}\}$ ($l_0>1$) parallel to one another with area close to $\pi r_0^2$. Using $P_t$ we can also make these disks as close to each other as wanted. By attaching the thin handles, we get a resulting surface $\mu$-isotopic to $D$. We will still denote this prepared surface $D$. Our next goal is to find a small handle in $D$ and move it by isotopy to a vertical neck inside $b_{r_0}$ between to consecutive disks (which are components of $D\cap b_{r_0}$). This is because then it is easy to decrease the area of $D$ (without increasing it by more than $\mu$) by a uniform amount comparable to $2\pi r_0^2$ depending only on $\Gamma_0$. 

We attach the handles one by one to $B^*$. When attaching these handles, we stop at the first one that makes one of the layers
$$\varphi^{-1}(\{x\in N ; \text{dist}(x,S_i) = \bar{\delta} \})$$
and a layer
$$\varphi^{-1}(\{x\in N ; \text{dist}(x,S_{j_0}) = \bar{\delta} \frac{m_{2,1}-1}{m_{2,1}}\}) \quad \text{for some }j_0\in \mathcal{J}$$
into the same connected component; this handle exists because $D$ is connected. We can deform with a $\mu$-isotopy this handle into a handle joining two consecutive disks, say $d_1$ and $d_2$ (renumber the disks $d_l$ if necessary).
and close to a straight curve joining $d_1$ and $d_2$. 
This is possible due to the following fact. The surgery corresponding to this handle is on the side of $\Gamma_1$ and in a compression body $W$, any two curves linking $\partial_{+}W$ to a sphere component of $\partial_- W$ are isotopic inside $W$: indeed we can almost project each of these curves to $\partial_{+}W$ and by moving the roots of the curve, plus the fact the one can inverse the self-crossings by isotopy, we deform the curve to a chosen fixed one. 

After having deformed the critical handle to a vertical handle, we continue to add the remaining handles and get a surface $D'$ $\mu$-isotopic to $D$. Then by "opening up" the special handle, one decreases by a uniform amount (depending only on $\Gamma_0$) the area of ${D}'$ by a $\mu$-isotopy. This means that thanks to $\gamma$-reductions again in each connected of $\Omega_r$ we can deform by a $\mu$-isotpoy ${D}'$ into $D''$ arbitrarily close to $V'_2= m'_{2,1}|S_1|+...m_{2,p} |S_p|$, where $ m'_{2,j}<m_{2,j}$. We continue until we get $E$ from $D$ by a $\mu$-isotopy, such that $E$ is close to $V_3$ of the form (\ref{frog}). Since $\Sigma_{b_q}$ separates and $B(\Sigma_{b_q})$ has volume close to $0$, this means that $E$ is very close to $\Gamma_0$ in the varifold sense. By repeating the process described above using a sequence of smaller and smaller $\mu'$ going to zero and using the basin structure stated in the claim in the proof of Lemma \ref{canapply1}, we can join $E$ to $\Gamma_0$ (in the sense of Definition \ref{defsmoothsweep}) by a $\mu$-isotopy. To sum up what we just did: we deformed $\Sigma_{b_q}$ to $C$, then from $C$ we got a surface $E$ which we join to $\Gamma_0$, all by $\mu$-isotopies. 
Moreover,  if $\{Y_t\}_{t\in[0,1]}$ denote the surfaces constructed by isotopies as above from $C$ to $E$, arguing as in Claims 3 and 4 in the proof of \cite[Deformation Theorem C]{MaNeindexbound}, one can choose $\mu$ and $\{Y_t\}$ so that for a positive $\eta$ independent of the index $i$ of $\{\Sigma^i_t\}$ and $\mu$ when they are respectively large and small enough,  
$$
\forall t\in[0,1],\quad\mathbf{F}(|Y_t|,V') > \eta
$$ 
for all stationary integral varifold $V'$ with mass $W(N,\Pi)$.
By reversing the directions of the isotopies, we get an isotopy joining $\Gamma_0$ to $\Sigma_{b_q}$, and we glue this isotopy to $\{\Sigma_t\}_{t\in[b_{b_q},1]}$.

The above procedure can be realized in a symmetric way for $t=c_1$ (remember that $[c_1,d_1]$,...,$[c_{q'},d_{q'}]$ are the intervals composing $\mathbf{V}_{\alpha} \cap (b_q,1]$). As a result, we get a new family $\{\hat{\Sigma}_t\}_{t\in[b',a']}$, where $0\leq b'<a'\leq 1$. 
After reparametrizing, we obtain a family still denoted by $\{\hat{\Sigma}_t\}_{t\in[0,1]}$, satisfying the following properties for some $\eta$ small enough and for $\mu$ arbitrarily small:
\begin{itemize}
\item $\{\hat{\Sigma}_t\} \in \Pi$,
\item $\max_t Area(\hat{\Sigma}_t) \leq \max_t Area({\Sigma}_t) + \mu$,
\item $\mathbf{F}(|\hat{\Sigma}_t|,V) > \eta$ for all $t\in[0,1]$ and all $V$ of the form (\ref{form}) with $\mathbf{M}(V)=W(N,\Pi)$,
\item $\mathbf{F}(|\hat{\Sigma}_t|,V') > \eta$ for all $t\in[0,1]$ and all stationary integral varifold $V'$ with $\mathbf{M}(V')=W(N,\Pi)$ which are not in $\mathbf{\Lambda}(\{\{{\Sigma}^i_t\}\}_i)$.
\end{itemize}
We emphasize that this sweepout $\{\hat{\Sigma}_t\}$ is not a priori homotopic to the original sweepout $\{\Sigma\}_t$.

Now we can conclude the proof. Let $\{\{\Sigma^i_t\}\}_i$ be a pulled-tight minimizing sequence and $\mu_i$ a sequence going to $0$. Transforming each sweepout of this sequence as above with parameter $\mu_i$ instead of $\mu$ (but keeping $\alpha>0$ fixed) produces a new pulled-tight minimizing sequence $\{\{\hat{\Sigma}^i_t\}\}_i\subset \Pi$. Note similarly to \cite[Deformation Theorem C]{MaNeindexbound} that by construction:
\begin{align} \label{inclusi}
\begin{split}
 & \mathbf{\Lambda}(\{\{\hat{\Sigma}^i_t\}\}_i) 
\cap \{\text{stationary integral varifolds of mass $W(N,\Pi)$}\} \\
\subset  \quad&  \mathbf{\Lambda}(\{\{{\Sigma}^i_t\}\}_i) 
\cap \{\text{stationary integral varifolds of mass $W(N,\Pi)$}\}.
\end{split}
\end{align} 
But by construction of $\{\{\hat{\Sigma}^i_t\}\}_i$, any varifold $V' \in \mathbf{\Lambda}(\{\{\hat{\Sigma}^i_t\}\}_i)$ with $||V'||(N)=W(N,\Pi)$ is $\eta$-far from any varifold of the form (\ref{form}). So the usual min-max theorem (see \cite{C&DL}) would produce a varifold in the first intersection in (\ref{inclusi}) with smooth support, i.e. a minimal surface whose area counted with multiplicity is $W(N,\Pi)$, and which is not entirely contained in the boundary $\partial N$. This contradicts our assumption on $\mathbf{\Lambda}(\{\{{\Sigma}^i_t\}\}_i)$ so the theorem is proved.

\end{proof}

It is expected more generally that, similarly to the Almgren-Pitts setting (see \cite{MaNeindexbound}), oriented local min-max surfaces cannot be stable for bumpy metrics.

\begin{remarque} \label{topolo}
Strong irreducibility has consequences on how surgeries can be performed. The following observation was used in previous works but not really explained. Suppose that $S=S_1$ is strongly irreducible in $N$ and separates $N$ into $W$, $W'$. Suppose also that $S_2$, ..., $S_K$ are successively obtained by surgery from the previous one (for instance along a $\gamma$-reduction \cite{MSY} or a smooth min-max procedure \cite{Ketgenusbound}). We can assume the surgery curves $\alpha_i$ ($i\in\{1,...,K\}$) to be disjoint and contained in $S_1$. Then by switching $W$ and $W'$ if necessary, for all $i \in \{1,...,K\}$, if the surgery is performed along a curve $\alpha_i$ essential in $S_i$, $\alpha_i$ bounds a disk in $W$. This follows from \cite[Theorem 2.1]{Scharlemann}.


From this "one-sided surgeries" property, we deduce two useful facts \cite{Rubinsteinnotes}, \cite[Theorem 3.3]{KeMaNe}. Firstly if the surfaces of a smooth sweepout are isotopic to a strongly irreducible Heegaard splitting $H$ as in Theorem \ref{smoothminmax} and a min-max surface is non-orientable, then there is only one non-sphere component, which has multiplicity two when not an $\mathbb{R}P^2$, and its double cover plus a vertical handle is isotopic to $H$ (see \cite[Corollary 1.6]{Heath}). Secondly if the min-max surface is orientable, then the non-sphere components have multiplicity one (this can be checked using \cite[Theorem 2.11]{ScharlemannThompson}).
\end{remarque}

\begin{remarque}\label{aboutMSY}
For a $j\in\{1,...,p\}$, let $B'$ be the intersection of $B^*$ and the $r$-neighborhood of $S_{2,j}$. By strong irreducibility of the Heegaard splitting $H$, it suffices to prove that the number of components of $B'$ projecting with degree $\pm1$ on $S_{2,j}$ with $P_1$ is exactly $m_{2,j}$. Indeed the connected components project each with degree either $\pm1$ or $0$ by embeddedness and orientability. The components of degree $0$ bound a handlebody inside the $r$-neighborhood so they are spheres bounding balls included in the $r$-neighborhood. These spheres have small area since otherwise one could use $P_t$ and reduce their area by a uniform amount with a $\mu$-isotopy inside the $r$-neighborhood, which would contradict the fact that $V_2$ (which is close to $B^*$) was obtained by $\gamma$-reduction from $P_t(B)$. The number of components with degree $\pm1$ cannot exceed $m_{2,j}$ for $B^*$ has area close to the mass of $V_2$. Now if the number of components with degree $\pm1$ is less than $m_{2,j}$ then the area of one of these components is larger than say $(1+\frac{1}{m_{2,j}})\mathcal{H}^2(S_{2,j}) $ ($B^*$ being very close to $V_2$ as varifolds). But using $P_t$ ($t$ close to $1$) one can decrease the area of such a component by a uniform amount without increasing it (one may move the other components as well during the process). That contradicts the fact that $V_2$ is the limit obtained by $\gamma$-reduction with $\mu$-isotopies. Thus the number of components with degree $\pm 1$ is exactly $m_{2,j}$, and by strong irreducibility there are isotopic together to graphs over $S_{2,j}$.

\end{remarque}

The following lemma is the smooth analogue of Lemma \ref{canapply1}.

\begin{lemme} \label{canapply2}
Assume $N \subset (M,g)$, $\Gamma_0$, $\Gamma_1$ to be as in Theorem \ref{smoothminmax}. Let $\Pi$ be a saturated set whose elements satisfy: for any smooth sweepout $\{{\Sigma}_t\}$ in $\Pi$, $\mathcal{A}(\{{\Sigma}_t\}) = [|N|]$, and $\Sigma_i=\Gamma_i $ for $i=0,1$. Suppose that ${\Gamma}_0$ is a stable minimal surface. Then
$$ W(N,\Pi) > \mathcal{H}^2({\Gamma}_0).$$

Consequently if ${\Gamma}_0$ and ${\Gamma}_1$ are both stable and $\Pi$ satisfies the hypotheses of Theorem \ref{smoothminmax}, then its conclusion holds. 

\end{lemme}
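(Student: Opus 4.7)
The plan is to reduce Lemma \ref{canapply2} directly to the mass-gap claim already established inside the proof of Lemma \ref{canapply1}, since that claim concerns only integral cycles in $\mathcal{Z}_2(M)$ and the stable minimal surface $\Gamma_0$, and is insensitive to whether the sweepouts are of Almgren--Pitts or Simon--Smith type.

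First, I would show that every $\{\Sigma_t\}\in\Pi$ must possess a smooth slice whose varifold lies in a prescribed $\mathbf{F}$-annulus around $|\Gamma_0|$. Because $\mathcal{A}(\{\Sigma_t\})=[|N|]$ and the endpoints are $\Sigma_0=\Gamma_0\cup\{\text{curves}\}$, $\Sigma_1=\Gamma_1\cup\{\text{curves}\}$, the associated family of integral cycles $t\mapsto|\Sigma_t|$ connects $|\Gamma_0|$ to $|\Gamma_1|$ continuously in the $\mathbf{F}$-topology: continuity outside the finite singular set $T$ is immediate from smooth deformation of $\Sigma_t$, while at each $t_0\in T$ the smooth-sweepout axioms (smooth convergence off the finite set $P$ together with continuity of $\mathcal{H}^2(\Sigma_t)$) force $\mathbf{F}$-continuity on the bounded-mass subset $\{V:\|V\|(M)\leq \sup_t\mathcal{H}^2(\Sigma_t)\}$. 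Choosing $\epsilon_0>0$ small enough that $\mathbf{F}(|\Gamma_0|,|\Gamma_1|)>2\epsilon_0$, the intermediate value property then yields $t\in(0,1)$ with
$$\mathbf{F}(|\Sigma_t|,|\Gamma_0|)\in(\epsilon_0,2\epsilon_0).$$

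Next, I would apply the claim proved inside Lemma \ref{canapply1}: for $\epsilon_0$ sufficiently small there exists $\delta=\delta(\Gamma_0,\epsilon_0)>0$, independent of the sweepout, such that every integral cycle $T\in\mathcal{Z}_2(M)$ with $|T|\in\mathbf{B}^{\mathbf{F}}_{2\epsilon_0}(|\Gamma_0|)\setminus\mathbf{B}^{\mathbf{F}}_{\epsilon_0}(|\Gamma_0|)$ satisfies $\mathbf{M}(T)\geq\mathcal{H}^2(\Gamma_0)+\delta$. The slice $\Sigma_t$ obtained above is an oriented embedded surface, hence an integral cycle with $\mathbf{M}(\Sigma_t)=\mathcal{H}^2(\Sigma_t)$, so $\sup_t\mathcal{H}^2(\Sigma_t)\geq\mathcal{H}^2(\Gamma_0)+\delta$ for every sweepout in $\Pi$; taking the infimum over $\Pi$ yields $W(N,\Pi)\geq\mathcal{H}^2(\Gamma_0)+\delta>\mathcal{H}^2(\Gamma_0)$. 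The ``consequently'' clause follows by applying the symmetric statement to $\Gamma_1$ (time-reverse the sweepouts), so the hypothesis $\max\{\mathcal{H}^2(\Gamma_0),\mathcal{H}^2(\Gamma_1)\}<W(N,\Pi)$ of Theorem \ref{smoothminmax} is then verified and its conclusion applies.

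The one genuinely delicate point is the $\mathbf{F}$-continuity of $t\mapsto|\Sigma_t|$ across the singular times in $T$, since a catenoidal neck-pinch at a point of $P$ could in principle produce a varifold jump. The finiteness of $P$ combined with the smooth-off-$P$ convergence and the continuity of total area forces the varifold limits from either side of $t_0\in T$ to coincide, and on bounded-mass subsets the $\mathbf{F}$-topology agrees with the weak varifold topology, so this is controlled. This is the one place where the smooth-sweepout structure (and not merely flat continuity) is used; the remainder of the argument is a verbatim translation of Lemma \ref{canapply1}.
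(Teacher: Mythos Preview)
Your argument is correct and ends at the same place as the paper's---the mass-gap Claim from the proof of Lemma \ref{canapply1}---but the route you take to get a cycle in the $\mathbf{F}$-annulus is genuinely different. The paper does not argue $\mathbf{F}$-continuity of $t\mapsto|\Sigma_t|$ at all; instead it invokes the interpolation/discretization results of Marques--Neves (cited earlier as \cite[Sections 13--14]{MaNeWillmore}, \cite{Zhou}, \cite{Zhou2}) to turn a minimizing sequence of smooth sweepouts into a homotopy sequence in $\pi_1^\sharp(\mathcal{Z}_2(N),C_0,C_1)$, deduces the black-box inequality $W(N,\Pi)\geq\mathbf{L}(\Pi)$, and then quotes Lemma \ref{canapply1} wholesale. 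Your version is more self-contained: you avoid the discretization machinery entirely and apply the Claim directly to a smooth slice, at the cost of having to verify $\mathbf{F}$-continuity across the singular times (which, as you note, follows from smooth convergence off the finite set $P$ together with continuity of $\mathcal{H}^2(\Sigma_t)$, ruling out mass concentration). Both approaches are short; the paper's is cleaner to state but leans on heavier external results, while yours is more elementary but requires the continuity check you flagged. One small point: after invoking $\mathbf{F}$-continuity to find a $t$ in the annulus, you should note that $t$ can be taken outside the finite set $T$ (the annulus condition is open), so that $\Sigma_t$ is genuinely a smooth oriented embedded surface and hence an integral cycle.
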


\begin{proof}
Notice the following general inequality relating the Simon-Smith and Almgren-Pitts widths. By discretizing a sequence of minimizing smooth sweepouts $\{\Sigma^i_t\}\in \Pi$, we get a homotopy sequence of mappings $\{\psi_i\} \in \Pi \in \pi_1^\sharp(\mathcal{Z}_2(N),C_0,C_1)$ where the currents $C_0$, $C_1$ are determined by $\Sigma_0$, $\Sigma_1$. As a consequence, 
$$W(N,\Pi) \geq \mathbf{L}(\Pi).$$
The proof then follows immediately from Lemma \ref{canapply1} 
\end{proof}

\section{Minimal Heegaard splittings in orientable 3-manifolds} \label{Rubinstein conj}

In this section, $M$ is an irreducible oriented closed $3$-manifold not diffeomorphic to $S^3$. All surfaces considered are closed embedded. For simplicity, we will use "minimal Heegaard splitting" to denote a closed connected embedded minimal surface which is a Heegaard splitting.

\subsection{}\textbf{Main theorem}

We now state the main theorem.

\begin{theo} \label{positivegenus}
Let $(M,g)$ be a closed oriented $3$-manifold not diffeomorphic to the $3$-sphere. Suppose that there is a strongly irreducible Heegaard splitting $H$. Then either $H$ is isotopic to a minimal surface of index at most one, or isotopic to the stable minimal oriented double cover of a non-orientable minimal surface with a vertical handle attached.

\end{theo}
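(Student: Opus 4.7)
The plan is to apply the local min-max theorem of Section \ref{prelim} on a carefully chosen subdomain, the \emph{core} produced by Corollary \ref{existence core}, and then identify the resulting interior minimal surface with $H$ using the topological analysis underlying Remark \ref{topolo}. By a bumpy metric approximation argument combined with compactness for minimal surfaces of bounded area and index, it will suffice to treat the case when $g$ is bumpy; the general case then follows by approximating $g$ by bumpy metrics $g_i \to g$ and passing to a subsequential limit of the minimal representatives, whose areas stay controlled by the Heegaard width $W(M,H)$. We therefore assume $g$ bumpy.

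Starting from the trivial pair $(\emptyset, \emptyset) \in \mathcal{S}_g(M, H)$, which is $H$-compatible via any standard Heegaard sweepout, Corollary \ref{existence core} either directly identifies $H$ with the oriented double cover of a non-orientable minimal surface with a vertical handle, giving conclusion (ii), or produces a core $\mathfrak{C}$ of $(M,H)$ with stable minimal boundary $\tilde\Gamma_0 \cup \tilde\Gamma_1$. In the latter case, the $H$-compatibility of $(\tilde\Gamma_0,\tilde\Gamma_1)$ provides a smooth sweepout of $\mathfrak{C}$ with slices isotopic to $H$; the saturated class $\Pi$ it generates satisfies $\mathcal{A}(\{\Sigma_t\}) = [|\mathfrak{C}|]$. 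By Lemma \ref{canapply2}, the stability of both boundary components forces $W(\mathfrak{C},\Pi) > \max\{\mathcal{H}^2(\tilde\Gamma_0), \mathcal{H}^2(\tilde\Gamma_1)\}$, so Theorem \ref{smoothminmax} applies and yields a stationary integral varifold $V = \sum_{i=1}^k m_i \Sigma_i^\infty$ of mass $W(\mathfrak{C},\Pi)$, total index at most one, satisfying the stated genus bound, and with at least one component $\Sigma_{i_0}^\infty \subset \interior(\mathfrak{C})$.

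The key claim is that no component of $V$ in $\interior(\mathfrak{C})$ can be stable. Indeed, a stable interior component would, via the one-sided-surgeries property of Remark \ref{topolo} combined with the strong irreducibility of $H$, separate $M$ and bound a handlebody on the correct side, and the complement would inherit from the $H$-compatible sweepout of $\mathfrak{C}$ a sweepout isotopic to $H$ via the residual handles; pairing it with the original $\tilde\Gamma_0, \tilde\Gamma_1$ would then produce an element of $\mathcal{S}_g(\mathfrak{C}, H)$ strictly enlarging $(\tilde\Gamma_0, \tilde\Gamma_1)$, contradicting the maximality of the core. Combined with the total index bound, this forces a unique interior component $\Sigma_{i_0}^\infty$ of index exactly one, while every other $\Sigma_i^\infty$ is stable and contained in $\partial \mathfrak{C}$.

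Finally we identify $\Sigma_{i_0}^\infty$ topologically with $H$. If $\Sigma_{i_0}^\infty$ is orientable, Remark \ref{topolo} gives $m_{i_0} = 1$ and realizes $H$ as isotopic to $\Sigma_{i_0}^\infty$, the remaining sphere and boundary pieces being absorbed along the surgery identification; this yields conclusion (i). If $\Sigma_{i_0}^\infty$ is non-orientable, Remark \ref{topolo} gives $m_{i_0} = 2$ and identifies $H$ with its oriented double cover plus a vertical handle; the catenoid estimate of \cite{KeMaNe}, applied to a catenoid neck built around $\Sigma_{i_0}^\infty$, would strictly lower the min-max width if this double cover were unstable, contradicting $\|V\|(\mathfrak{C}) = W(\mathfrak{C},\Pi)$. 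Thus the double cover is stable and conclusion (ii) holds. The principal obstacle in executing this plan is the maximality argument of the third paragraph: converting a stable interior component into a genuine element of $\mathcal{S}_g(\mathfrak{C}, H)$ requires combining strong irreducibility with the topological picture of Remark \ref{topolo} with some care, in order to guarantee the existence of the required $H$-compatible sweepout of the enlarged complement.
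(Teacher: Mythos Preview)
Your overall strategy matches the paper's: reduce to a core via Corollary \ref{existence core}, run the local smooth min-max of Theorem \ref{smoothminmax} there, and use Remark \ref{topolo} plus the catenoid estimate for the non-orientable case. The non-orientable branch and the reduction to bumpy metrics are in the right spirit.

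There is, however, a genuine gap in your orientable branch. You correctly argue that no \emph{stable} component of the min-max varifold can lie in $\interior(\mathfrak{C})$ (core maximality) and hence that $\Sigma_{i_0}^\infty$ is the unique interior component and has index one. But from this you jump to ``$H$ is isotopic to $\Sigma_{i_0}^\infty$''. Remark \ref{topolo} only tells you that the non-sphere components have multiplicity one and that the surgeries are one-sided; it does \emph{not} say that no essential surgery occurred. The remaining components of $V$ may sit in $\partial\mathfrak{C}$ with positive genus, in which case $H$ is recovered as $\Sigma_{i_0}^\infty$ tubed to those boundary pieces and has strictly larger genus than $\Sigma_{i_0}^\infty$. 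The paper closes this gap differently: assuming $H$ is not isotopic to a minimal surface, it takes the interior unstable component $\Sigma'$, checks that $\Sigma'$ is \emph{incompressible on its non-handlebody side} (this uses strong irreducibility in an essential way), minimizes by $\gamma$-reduction there to produce a stable $\Sigma''\subset\interior(\mathfrak{C})$ isotopic to $\Sigma'$, and then replaces $\Gamma_0$ by $(\Sigma\cup\Sigma'')\setminus\Sigma'$ to strictly enlarge the $H$-compatible pair, contradicting maximality of the core. Your maximality argument only forbids stable interior components of the min-max limit itself; it does not manufacture the extra stable surface $\Sigma''$ needed to rule out the scenario where $\Sigma'$ is a proper compression of $H$.

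Your bumpy approximation sketch is also too loose. Saying that the areas are ``controlled by the Heegaard width $W(M,H)$'' does not suffice: for nearby bumpy metrics $g_m$ you must run the argument on $C$-cores $\mathfrak{C}_m$ and you need $\partial\mathfrak{C}_m\to\partial\mathfrak{C}$ so that the local widths $W(\mathfrak{C}_m,\Pi)$ converge and the limit surface stays in $\mathfrak{C}$. This is exactly the content of Lemma \ref{approximation}, which perturbs $g$ so that $\partial\mathfrak{C}$ becomes strictly stable, finds nearby strictly stable surfaces $S_m$ for $g_m$, and then invokes Corollary \ref{existence core} with the area cutoff $C$ to produce convergent $C$-cores. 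Without this, a sequence of cores for $g_m$ could in principle drift, and you would lose control of both the area and the topological identification in the limit.
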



Of course for $3$-spheres, the theorem of Simon-Smith gives the existence of a minimal $2$-sphere of index at most one \cite{Smith} \cite{C&DL}. The existence of a strongly irreducible Heegaard splitting forces $M$ to be irreducible.

Notice the following. If $h$ is the Heegaard genus of $M$ and if $M$ contains an incompressible surface of positive genus $k$ less than $h$, then by \cite{FreedmanHassScott} 
\begin{itemize}
\item there is an oriented area-minimizing surface of genus $k$,
\item or there is a non-orientable minimal surface whose oriented double cover is area minimizing and of genus $k$.
\end{itemize}
On the other hand, if $M$ is irreducible and does not contain any such surfaces (for instance when $M$ is non-Haken), then there is a strongly irreducible Heegaard splitting so Theorem \ref{positivegenus} applies. This follows from \cite[Theorem 3.1]{CassonGordon}, whose proof implies the following: if $H$ is a Heegaard splitting of genus $h'$ which is not strongly irreducible, then either it is reducible or $M$ contains an incompressible surface of positive genus less than $h'$.

We list a few corollaries of Theorem \ref{positivegenus}.

\begin{coro}
The conjecture of Rubinstein is true.
\end{coro}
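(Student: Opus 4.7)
The plan is to observe that this corollary is essentially a restatement of Theorem \ref{positivegenus}, the only point requiring a small extra argument being the case $M\cong S^3$, which Theorem \ref{positivegenus} explicitly excluded. Rubinstein's conjecture asks only for a minimal representative (or a double-cover-with-vertical-handle representative) and imposes no index bound or stability statement, so once Theorem \ref{positivegenus} is in hand it supplies the conclusion of the conjecture for all $M\not\cong S^3$ directly, and indeed in strengthened form.

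The first step I would carry out is just to invoke Theorem \ref{positivegenus} when $M$ is not diffeomorphic to $S^3$: it produces either a minimal surface isotopic to $H$ of index at most one, or the stable minimal oriented double cover of a non-orientable minimal surface with a vertical handle attached (isotopic to $H$). In particular $H$ is isotopic to a minimal surface or to the oriented double cover of a non-orientable minimal surface with a vertical handle attached, which is what Rubinstein's conjecture demands.

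The second step is to dispose of $M\cong S^3$. I would appeal to Waldhausen's classification: every Heegaard splitting of $S^3$ is a stabilization of the genus-zero one. Every stabilized splitting is weakly reducible, hence not strongly irreducible; so a strongly irreducible $H\subset S^3$ must be isotopic to the standard equatorial $S^2$. The classical Simon-Smith theorem (cited in the paper right after Theorem \ref{positivegenus}) then provides a minimal embedded $2$-sphere $\Sigma\subset(S^3,g)$ of index at most one. By Alexander's theorem applied in $S^3$, any embedded $2$-sphere bounds a $3$-ball on each side, and any two embedded $2$-spheres in $S^3$ are ambient isotopic; therefore $\Sigma$ is isotopic to $H$.

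There is no genuine obstacle here: the only thing to be careful about is making sure that combining Theorem \ref{positivegenus} (for $M\not\cong S^3$) with the Simon-Smith/Alexander argument (for $M\cong S^3$) covers every closed oriented Riemannian $3$-manifold carrying a strongly irreducible Heegaard splitting, which it does. No further geometric measure theory is needed beyond what was established earlier in the paper.
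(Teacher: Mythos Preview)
Your proposal is correct and follows essentially the same approach as the paper. The paper gives no separate proof for this corollary, treating it as immediate from Theorem \ref{positivegenus} together with the remark (stated right after that theorem) that for $3$-spheres Simon-Smith produces a minimal $2$-sphere of index at most one; your added observation via Waldhausen that a strongly irreducible splitting of $S^3$ must be the genus-zero sphere just makes explicit what the paper leaves implicit.
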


\begin{coro}
Any lens space not diffeomorphic to $S^3$ or $\mathbb{R}P^3$ contains a minimal Heegaard splitting of genus one with index at most one. 
\end{coro}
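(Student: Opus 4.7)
The plan is to derive this corollary as a direct consequence of Theorem \ref{positivegenus}. Let $M = L(p,q)$ with $p \geq 3$, endowed with any Riemannian metric, and let $H \subset M$ be the standard genus-one Heegaard torus, namely the common boundary of the two solid tori whose union is $M$. I would first verify that $H$ is strongly irreducible. The only essential disks in each solid torus are meridian disks (up to isotopy), and their boundary curves on $H$ have geometric intersection number $p \geq 3 \neq 0$; they therefore cannot be isotoped to be disjoint, so no pair of disjoint essential disks on opposite sides of $H$ exists.

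Then I would apply Theorem \ref{positivegenus} to $H$. Either $H$ is isotopic to a minimal surface of index at most one (in which case we are done, since the resulting minimal surface is a torus serving as a Heegaard splitting), or $H$ is isotopic to the stable oriented double cover of a connected non-orientable minimal surface $\Sigma \subset M$ with a vertical handle attached. In the latter case, writing $k$ for the non-orientable genus of $\Sigma$, one has $\chi(\Sigma) = 2-k$, so the oriented double cover has orientable genus $k-1$, and attaching a vertical $1$-handle raises the genus to $k$. Comparing with the genus of $H$ forces $k=1$, i.e.\ $\Sigma \cong \mathbb{R}P^2$.

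The one remaining step is to rule out the existence of an embedded $\mathbb{R}P^2 \subset L(p,q)$ when $p \geq 3$. I would invoke the standard fact that a regular neighborhood of an embedded $\mathbb{R}P^2$ in an orientable $3$-manifold is diffeomorphic to $\mathbb{R}P^3 \setminus \interior(B^3)$, whose boundary is $S^2$. Since $L(p,q)$ is irreducible for $p \geq 1$, this $S^2$ must bound a $3$-ball on the complementary side, forcing $M \cong \mathbb{R}P^3$ and $p=2$, contradicting $p \geq 3$. Consequently the non-orientable alternative cannot occur, and $H$ is isotopic to a minimal torus of index at most one.

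The substantive geometric analysis is entirely packaged inside Theorem \ref{positivegenus}; the remaining bookkeeping is elementary $3$-manifold topology (strong irreducibility of the Heegaard torus, an Euler characteristic count, and the irreducibility-plus-$\mathbb{R}P^2$-neighborhood argument). Accordingly, I do not anticipate any real obstacle in this corollary beyond what has already been done.
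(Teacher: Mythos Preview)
Your argument is correct and is exactly the intended route: the paper states this corollary without proof, treating it as immediate from Theorem \ref{positivegenus}, and the only missing ingredients are precisely the elementary topological checks you supply (strong irreducibility of the genus-one splitting via the meridian intersection number, the Euler-characteristic computation forcing $\Sigma\cong\mathbb{R}P^2$, and the fact that an embedded $\mathbb{R}P^2$ in an irreducible orientable $3$-manifold forces $M\cong\mathbb{R}P^3$, which the paper itself invokes later in the proof of Theorem \ref{boomchak}).
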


\begin{coro}
Any $(\mathbb{R}P^3,g)$ contains
\begin{itemize}
\item either a Heegaard splitting of genus one and index at most one, 
\item or a minimal $\mathbb{R}P^2$ with stable oriented double cover.
\end{itemize}

If the metric $g$ is bumpy, then 
\begin{itemize}
\item either there is an index one Heegaard splitting of genus one,
\item or there is an index one minimal sphere.
\end{itemize}
\end{coro}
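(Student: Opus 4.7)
The plan is to apply Theorem \ref{positivegenus} to the genus-one Heegaard splitting $H$ of $\mathbb{R}P^3$. This splitting is strongly irreducible: the two meridian slopes on the common torus in $\mathbb{R}P^3 = L(2,1)$ have geometric intersection number one, so no pair of essential compressing disks from opposite sides can have disjoint boundaries. Theorem \ref{positivegenus} then furnishes one of two alternatives: either $H$ is isotopic to a minimal surface of index at most one, giving the first option of the corollary, or $H$ is isotopic to the stable oriented double cover of a non-orientable minimal surface $\Sigma$ with a vertical handle attached. Since attaching a vertical handle raises the genus by one and $H$ has genus one, the oriented double cover of $\Sigma$ has genus zero, i.e.\ is a $2$-sphere; hence $\Sigma$ is a minimal $\mathbb{R}P^2$ with stable oriented double cover, which is the second option.

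For the bumpy refinement I would treat the two alternatives separately. In the first alternative, the minimal Heegaard torus arose as a min-max limit for a $1$-parameter sweepout; under a bumpy metric such a min-max minimal surface cannot be stable---the Simon-Smith analogue of the Almgren-Pitts non-stability result flagged in the remark after Theorem \ref{smoothminmax}---so its index is exactly one. In the second alternative, let $\Sigma$ be the minimal $\mathbb{R}P^2$ with stable double cover; by bumpyness the stability is strict. I would lift to the universal cover $\pi\colon S^3\to\mathbb{R}P^3$, where $\Sigma$ lifts to a strictly stable minimal $2$-sphere $\tilde\Sigma$ bounding two $3$-balls $B_\pm$. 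Fix $B_+$ and, inside $B_+$, set up the saturated family $\Pi$ of sphere sweepouts from a point out to $\tilde\Sigma$ with Almgren image $[|B_+|]$ (the point being encoded in the Simon-Smith framework by a degenerate endpoint appended with curves). Lemma \ref{canapply2} gives $W(B_+,\Pi)>\mathcal{H}^2(\tilde\Sigma)$, and Theorem \ref{smoothminmax} yields a connected minimal surface in the interior of $B_+$ with index at most one. Since $B_+$ is simply connected, this surface is an embedded $2$-sphere, and since $B_+$ contains no antipodal pairs it descends under $\pi$ to an embedded minimal $2$-sphere in $\mathbb{R}P^3$ of the same index. Bumpyness upgrades the index to exactly one.

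The main obstacle is the ``index exactly one'' upgrade: in the Almgren-Pitts setting this is due to Marques-Neves, while the Simon-Smith analogue is flagged as expected but not proved in the paper, so that result (or an ad hoc argument based on non-degeneracy and the multiplicity-one conclusion of Theorem \ref{smoothminmax}) must be invoked. A secondary technical issue is that bumpyness on $\mathbb{R}P^3$ does not automatically descend to bumpyness of the lifted metric on $S^3$ at antipodally invariant minimal surfaces; however, since only finitely many such surfaces lie below any given area bound, this can be handled by a small perturbation of $g$ within the bumpy class, or by working directly in $\mathbb{R}P^3\setminus\Sigma$ and treating the sweepout endpoint as a degenerate configuration.
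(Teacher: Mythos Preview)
Your first paragraph is correct and matches the paper: the first part of the corollary is a direct application of Theorem \ref{positivegenus} to the strongly irreducible genus-one splitting of $\mathbb{R}P^3$.

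For the bumpy refinement, however, you lean twice on the Simon-Smith non-stability principle (``under a bumpy metric such a min-max minimal surface cannot be stable''), which the paper explicitly flags as expected but does \emph{not} prove. This is a genuine gap, and the paper avoids it by a different mechanism: the core construction. In the first alternative, the proof of Theorem \ref{positivegenus} for bumpy metrics already produces an index-one surface, because the min-max is run inside a core $\mathfrak{C}$ and the defining maximality of $\mathfrak{C}$ forbids stable components in $\interior(\mathfrak{C})$; so the interior component is unstable and the total index, being at most one, equals one. In the second alternative the paper does not lift to $S^3$ and apply min-max to the whole ball $B_+$. Instead it cuts $\mathbb{R}P^3$ along the minimal $\mathbb{R}P^2$ to obtain a $3$-ball, then (as in Corollary \ref{existence core}) passes to a \emph{core} of this $3$-ball whose interior contains no stable sphere, and applies Theorem \ref{smoothminmax} there. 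The interior min-max sphere is then automatically unstable---again by maximality of the core, not by any general non-stability theorem---and hence has index exactly one.

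So the fix is simple: replace both appeals to the unproved non-stability result by the core argument. This also dissolves your secondary worry about bumpyness on $S^3$, since one can work directly in the $3$-ball obtained by cutting (which embeds isometrically back into $\mathbb{R}P^3$ away from the boundary) rather than in the full double cover.
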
 

\begin{proof}
For the bumpy metric case, if there is a minimal $\mathbb{R}P^2$ with stable oriented double cover, then we get a $3$-ball by cutting $\mathbb{R}P^3$ along the previous projective plane. As in Corollary \ref{existence core}, we check that there is a core whose interior does not contain the boundary of the $3$-ball and applying Theorem \ref{smoothminmax} to this core, we get an interior index one minimal sphere.
\end{proof}

\textbf{Example:}
This corollary is optimal if we focus on index at most one tori not included in a $3$-ball, as shown by the following example. Consider a long cylindrical piece $[0,1]\times S^2$, cap it on one side (say $\{0\}\times S^2$) with a half-sphere, then take the quotient on the other side ($\{1\}\times S^2$) by the usual $\mathbb{Z}_2$-action to get an $\mathbb{R}P^3$ with positive scalar curvature. If the cylindrical piece is long enough with a metric near the product metric, and if the spheres $\{t\}\times S^2$ constitute a mean convex foliation with mean curvature vector pointing towards $\{1\}\times S^2$, then by the maximum principle, the monotonicity formula and the area bound \cite[Proposition A.1 (i)]{MaNe}, there is no index one minimal torus intersecting the projective plane $(\{1\}\times S^2) / \mathbb{Z}_2$.

Before proving Theorem \ref{positivegenus}, we need two lemmas. Recall that $(M,g)$ is  closed irreducible oriented and not diffeomorphic to the $3$-sphere.

\begin{lemme} \label{approximation}

Let $(M,g)$ be as above, let $H$ be an irreducible Heegaard splitting. Suppose that $H$ is not isotopic to a stable minimal surface or to the stable oriented double cover of a non-orientable minimal surface with a vertical handle attached. Let $\mathfrak{C}$ be an $H$-core (which exists by Corollary \ref{existence core}). Then for all constant $C>0$ large enough, there is a sequence of bumpy metrics $g_m $ converging smoothly to $g$ and $C$-bounded $H$-cores $ \mathfrak{C}_{m}$ with respect to $g_m$, such that $\partial \mathfrak{C}_{m}$ converges smoothly to $\partial \mathfrak{C}$ (with respect to $g$).
\end{lemme}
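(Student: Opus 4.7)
The plan is to combine three ingredients: White's bumpy metric genericity theorem, a perturbation of the stable minimal surfaces in $\partial\mathfrak{C}$ under small metric variations, and the maximality argument already encoded in Corollary \ref{existence core}. By White's theorem, choose a sequence of bumpy metrics $g_m \to g$ in the $C^\infty$ topology, and fix $C$ large enough that $\mathfrak{C}$ is a $C$-core for $g$ and $\mathcal{H}^2(\partial\mathfrak{C}) < C$. The goal is, for each large $m$, first to produce an $H$-compatible couple $(\tilde{\Gamma}_0^m, \tilde{\Gamma}_1^m) \in \mathcal{S}_{g_m}(\mathfrak{C}, H)$ whose components are $C^\infty$-close to $(\tilde{\Gamma}_0, \tilde{\Gamma}_1)$, then to apply Corollary \ref{existence core} to extend it to a $C$-core $\mathfrak{C}_m$ of $(M,H)$ for $g_m$, and finally to show that the maximal boundary defining $\mathfrak{C}_m$ converges smoothly to $\partial\mathfrak{C}$.

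For the perturbation, each connected component $\Sigma$ of $\partial\mathfrak{C}$ is a two-sided stable minimal surface for $g$. If $\Sigma$ is strictly stable, the implicit function theorem applied to the mean curvature operator yields a unique minimal graph $\Sigma^m$ over $\Sigma$ for $g_m$, still strictly stable and converging smoothly to $\Sigma$. Otherwise $\Sigma$ carries non-trivial Jacobi fields, and I would rely on a barrier argument: stability provides a foliation of a tubular neighborhood of $\Sigma$ by constant mean curvature surfaces with mean curvature of definite and opposite sign on the two sides; for $m$ large these leaves remain barriers for $g_m$, and minimizing area in the slab they bound, among surfaces separating it in the same topological way as $\Sigma$, produces a stable $g_m$-minimal surface $\Sigma^m$ that is $C^\infty$-close to $\Sigma$ by Schoen's curvature estimate together with the barrier confinement. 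In either case $\Sigma^m$ is orientable, separating, and bounds a handlebody on the correct side by closeness to $\Sigma$; $H$-compatibility of $(\tilde{\Gamma}_0^m, \tilde{\Gamma}_1^m)$ then follows by pushing forward a sweepout realizing the $H$-compatibility of $(\tilde{\Gamma}_0, \tilde{\Gamma}_1)$ via a diffeomorphism of $M$ close to the identity that maps $\partial\mathfrak{C}$ onto $\bigcup_i \tilde{\Gamma}_i^m$.

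Next, apply Corollary \ref{existence core} to $g_m$ with starting couple $(\tilde{\Gamma}_0^m, \tilde{\Gamma}_1^m)$ to obtain a $C$-core $\mathfrak{C}_m$ whose maximal couple $(\Gamma_0^m, \Gamma_1^m)$ dominates $(\tilde{\Gamma}_0^m, \tilde{\Gamma}_1^m)$ in the order $\preccurlyeq$. To conclude $\partial\mathfrak{C}_m \to \partial\mathfrak{C}$ smoothly I would argue by contradiction: each $\Gamma_i^m$ has $g_m$-area at most $C$ and is stable, so by Schoen's curvature estimates and standard compactness a subsequence converges smoothly, possibly with integer multiplicity, to a stable minimal surface $\Gamma_i^\infty$ for $g$ of area at most $C$. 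Since $B(\Gamma_i^m) \supset B(\tilde{\Gamma}_i^m)$ and $\tilde{\Gamma}_i^m \to \tilde{\Gamma}_i$, one has $B(\Gamma_i^\infty) \supset B(\tilde{\Gamma}_i)$, and $H$-compatibility of $(\Gamma_0^\infty,\Gamma_1^\infty)$ is obtained by pulling back the sweepouts for $g_m$ via diffeomorphisms close to the identity. If $(\Gamma_0^\infty, \Gamma_1^\infty)$ strictly dominated $(\tilde{\Gamma}_0, \tilde{\Gamma}_1)$ this would contradict the maximality of $\mathfrak{C}$ as a $C$-core for $g$, so the sequence must converge to $(\tilde{\Gamma}_0, \tilde{\Gamma}_1)$ itself.

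The main obstacle will be the perturbation step when $g$ itself is not bumpy and a component of $\partial\mathfrak{C}$ carries non-trivial Jacobi fields, since the implicit function theorem does not apply directly; the barrier-plus-minimization procedure above requires some care in constructing the CMC foliation from stability alone and in showing that the slab-minimizer for $g_m$ stays close to $\Sigma$ rather than jumping to an unrelated stable minimal surface produced in the slab by the perturbation. A secondary subtlety is that the compactness argument in the final step could in principle produce a higher-multiplicity limit, but a multiplicity-two limit of a two-sided stable sequence is still stable with the same support and the enclosed region still dominates $B(\tilde{\Gamma}_i)$, so the contradiction with $C$-core maximality of $\mathfrak{C}$ for $g$ is preserved.
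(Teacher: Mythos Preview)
Your overall architecture matches the paper's: produce for each $g_m$ an $H$-compatible couple close to $\partial\mathfrak{C}$, feed it into Corollary \ref{existence core} to get a $C$-core $\mathfrak{C}_m$, and then use Schoen's estimates plus the maximality of $\mathfrak{C}$ to force $\partial\mathfrak{C}_m\to\partial\mathfrak{C}$. The compactness/maximality part of your argument is essentially the paper's.

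The gap is exactly where you flag it. Your claim that ``stability provides a foliation of a tubular neighborhood of $\Sigma$ by constant mean curvature surfaces with mean curvature of definite and opposite sign on the two sides'' is not valid when $\Sigma$ is degenerate stable. If the lowest eigenvalue of the Jacobi operator vanishes, then pushing $\Sigma$ by $t\phi$ (with $\phi>0$ the first eigenfunction) produces surfaces whose mean curvature is $O(t^2)$ with no controlled sign, and equidistant surfaces have mean curvature governed by $-(|A|^2+\Ric(\nu,\nu))$ which need not have a sign either. In particular there may even be a one-parameter family of minimal surfaces near $\Sigma$, so no mean-convex barriers exist for $g$, and hence none persist for $g_m$. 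Your minimization in a ``slab'' then has no reason to stay near $\Sigma$. You identify this as the main obstacle but do not resolve it.

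The paper avoids the degenerate case entirely rather than confronting it. It first picks conformal factors $\tilde\lambda_m\to 1$ making $\partial\mathfrak{C}$ \emph{strictly} stable for $\tilde\lambda_m g$; this immediately yields a small tubular neighborhood $V_m$ of $\partial\mathfrak{C}$ with strictly mean-convex boundary for $\tilde\lambda_m g$. Only then does it perturb $\tilde\lambda_m g$ to a bumpy $g_m$, with the perturbation small enough that $\partial V_m$ stays mean-convex. Minimizing one half of $\partial V_m$ inside $V_m$ then gives a strictly stable $g_m$-minimal surface $S_m$ trapped in $V_m$, hence converging to $\partial\mathfrak{C}$. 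This two-step choice of $g_m$ (conformal first, then bumpy) is the missing idea in your argument; once you insert it, the rest of your outline goes through.
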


\begin{proof}
We choose a function $\tilde{\lambda}_m$ converging smoothly to $1$ so that $\partial \mathfrak{C}$ is strictly minimal for $\tilde{\lambda}_m g$. A small neighborhood $V_m$ of $\partial \mathfrak{C}$ then has strictly mean convex boundary $\partial V_m$ for $\tilde{\lambda}_m g$, and we can make $\partial V_m$ be diffeomorphic to two copies of $\partial \mathfrak{C}$, each one converging to $\partial \mathfrak{C}$ on one side as $m\to \infty$. Then using the genericity of bumpy metrics proved in \cite{Whitebumpy}, we modify slightly the metric $\tilde{\lambda}_m g$ into $g_m$ so that the stable surfaces of $(M,g_m)$ are strictly stable and $\partial V_m$ still has a mean convex boundary. By minimizing half of the boundary $\partial V_m$ inside $V_m$, one finds a strictly stable embedded minimal surface $S_m$ for $g_m$ so that $S_m$ converges to $\partial \mathfrak{C}$. Let $C$ be any constant larger than twice the area of $\partial \mathfrak{C} \subset (M,g)$. For $m$ large, Corollary \ref{existence core} gives the existence of a $C$-bounded $H$-core $\mathfrak{C}_m$ with
$$S_m \subset M\backslash \interior(\mathfrak{C}_m).$$
As $m$ tends to infinity, the boundary $\partial \mathfrak{C}_m$ converges smoothly by \cite{Sc} and the limit is $\partial \mathfrak{C}$, by definition of the $H$-core $\mathfrak{C}$.

\end{proof}

\begin{lemme} \label{constructsweep}  
Suppose that $(N,g)$ is a compression body endowed with $g$ a bumpy metric, such that $\partial_+N = \Gamma$ is mean-convex and $\partial_- N= \Gamma'$ is a disjoint union of stable minimal surfaces.
Then one of the following cases occurs:
\begin{itemize}

\item either there is a compression body $\tilde{N}$
such that the hypotheses above are true with $\Gamma'$ (resp. $N$) replaced by $\tilde{\Gamma}':=\partial_-\tilde{N}$ (resp. $\tilde{N}$), and moreover $\tilde{N}$ is strictly included in $N$,
\item or there is a smooth sweepout $\{\hat{\Sigma}_t\}_{t\in[0,1]}$ of $N$ with $\mathcal{A}(\{\hat{\Sigma}_t\}) = [|N|]$, $\hat{\Sigma}_0 = \Gamma$, $\hat{\Sigma}_1=\Gamma'$, $\hat{\Sigma}_t$ is isotopic to $\Gamma$ for $t\in[0,1)$, and moreover
$$\max_{t\in[0,1]} \mathcal{H}^2(\hat{\Sigma}_t) =\mathcal{H}^2(\Gamma).$$
\end{itemize}

\end{lemme}

\begin{proof}

Let us minimize the area of a surface close to $\Gamma$ (and with smaller area) inside $N$ using the $\gamma$-reduction ``with constraints" (see \cite[Section 7]{C&DL}). We get a limit stable minimal surface $S$. By the isotopic deformation arguments and the same topological arguments used in the proof of Theorem \ref{smoothminmax} (see Remark \ref{topolo}), we get that either $S$ is different from $\Gamma'$ in which case the first item in the lemma occurs, or $S$ is $\Gamma'$ and the second item occurs.


\end{proof}

\begin{proof}[Proof of Theorem \ref{positivegenus}]
The surfaces considered are closed and embedded. Let $H$ be a strongly irreducible Heegaard splitting of $M$. First suppose that the metric is bumpy. We can assume that $H$ is not isotopic to a stable minimal surface or to the stable oriented double cover of a non-orientable minimal surface $\Sigma \subset U$ with a vertical handle attached. So in particular there is an $H$-core $\mathfrak{C}$ by Corollary \ref{existence core}. Let $\Gamma_0 \cup \Gamma_1 =\partial \mathfrak{C} $ be a boundary decomposition with $\Gamma_i = \partial_-W_i'$, where $(W_0',W_1',H')$ is an $H$-compatible splitting. Let $\Pi$ be the saturated set generated by all the smooth sweepouts $\{\Sigma_t\}$ of $\mathfrak{C}$, such that $\mathcal{A}(\{\Sigma_t\}) = [|\mathfrak{C}|]$, such that $\Sigma_0$ (resp. $\Sigma_1$) is $\Gamma_0$ (resp. $\Gamma_1$) such that the slices $\Sigma_t$ ($0<t<1$) are smooth connected orientable and are isotopic to $H$. We can apply Theorem \ref{smoothminmax} to the $H$-core $\mathfrak{C}$ and we get an embedded minimal surface $\Sigma$ with area $W(\mathfrak{C},\Pi)$ (taking into account multiplicities).

If $\Sigma$ is oriented, it is obtained by surgeries of $H$. By strong irreducibility (see Remark \ref{topolo}) every time a surgery occurs along an essential curve, the surgery disk is on one well-defined side of $H$ independent of the surgery. Surgeries along non-essential curves can happen on both sides and split off spheres. Notice that we can suppose there is no minimal sphere in the interior of the $H$-core. Otherwise we can minimize its area on the non-trivial side: either we get an $\mathbb{R}P^2$ component with stable universal cover and so $M$ is actually an $\mathbb{R}P^3$, or we get some stable spheres in $\interior(\mathfrak{C})$ and we could have added these spheres to $\Gamma_0$ and get a smaller $H$-compatible couple, contradicting the definition of the $H$-core $\mathfrak{C}$ (here we are using that $M$ is not a $3$-sphere). The components of $\Sigma$ bound compression bodies or are included in the boundary of $\mathfrak{C}$, and none of the latter components which is not a sphere is included in one of the compression bodies previously mentioned. Since $H$ is strongly irreducible, the multiplicity of the non-sphere components is one (see Remark \ref{topolo}). By Theorem \ref{smoothminmax}, one of the component $\Sigma'$ is contained in the interior $\interior(\mathfrak{C})$, has positive genus, has multiplicity one, and is unstable by definition of an $H$-core again. There is exactly one such unstable component $\Sigma'$ by \cite{MaNeindexbound}. Suppose $H$ is not isotopic to $\Sigma'$. We minimize its area on the side $M'\subset \interior(\mathfrak{C}) $ which is not a compression body (say the side of $\Gamma_1$). One checks that $\Sigma'$ is incompressible inside $M'$ as follows: suppose that an essential curve $\tilde{\gamma}$ on $\Sigma'$ bounds a disk on one side, it has to be the side of $M'$, but since $H$ is not isotopic to $\Sigma'$ but isotopic to $\Sigma'$ with necks linking to some other components and some handles added to them (the necks and handles correspond to surgeries), there is an essential curve on one of the small necks or handles, disjoint from the disk bounded by $\tilde{\gamma}$ and bounding a disk on the other side of $H$, contradicting the strong irreducibility. So $\Sigma'$ is incompressible inside $M'$ and by $\gamma$-reduction one gets a stable minimal surface $\Sigma''\subset M'$. Either a component of $\Sigma''$ is non-orientable, then its oriented double cover is stable and by adding a vertical handle we get a surface isotopic to $H$ (by strong irreducibility of $H$): this case cannot happen by our assumption in the beginning of the proof. Or $\Sigma''$ is oriented: then replacing $\Gamma_0$ by $(\Gamma_0 \cup\Sigma'')\backslash \Sigma'$ (and removing the spheres inside compression bodies if necessary), we get a contradiction for the definition of the $H$-core $\mathfrak{C}$. So it means that the surface $\Sigma$ has one component which is isotopic to $H$.

If $\Sigma$ is non-orientable, then by the topological claim in the proof of \cite[Theorem 3.3]{KeMaNe} and \cite{Ketgenusbound} or Remark \ref{topolo}, there is a unique component $\Sigma'$ of $\Sigma$ in the interior $\interior(\mathfrak{C})$. It is non-orientable, has multiplicity at least two and for $\epsilon>0$ small enough, each $\Gamma_t$ is isotopic to $\partial N_\epsilon (\Sigma')$ with a vertical handle attached. 
Remember that the oriented double cover $\tilde{\Sigma'}$ of $\Sigma'$ is unstable by our assumption in the beginning of the proof. We now apply Lemma \ref{constructsweep} to the complementary inside the $H$-core $\mathfrak{C}$ of a small neighborhood of $\Sigma'$. By definition of an $H$-core, only the second item of this lemma is valid. Hence as in \cite[Theorem 3.3]{KeMaNe}, the unstability of $\tilde{\Sigma'}$ gives rise to a smooth sweepout $\{\tilde{\Gamma}_t\} \in \Pi$, such that
$$W(\mathfrak{C}, \Pi) \leq \max_{t\in[0,1]} \mathcal{H}^2(\Gamma_t) < 2\mathcal{H}^2(\Sigma').$$
Since $2\mathcal{H}(\Sigma')\leq W(\mathfrak{C}, \Pi) $, it is a contradiction. So this non-orientable case cannot happen, and the first part of the theorem is proved in the case of bumpy metrics.

Finally if the metric is not bumpy, we use Lemma \ref{approximation} to approximate $g$ by bumpy metrics $g_m$. For each $m$, there is a $C$-bounded $H$-core $\mathfrak{C}_m$ and $W(\mathfrak{C}_m, \Pi)$ converges to $W(\mathfrak{C}, \Pi)$. If $C$ is chosen bigger than $2W(\mathfrak{C}, \Pi)$, then we check without difficulty that the above arguments hold for a $C$-bounded $H$-core instead of an $H$-core for large $m$, since $\lim_m W(\mathfrak{C}_m,\Pi) = W(\mathfrak{C},\Pi)$. We get for each $m$ large enough 
\begin{itemize}
\item either a minimal surface of at most index one isotopic to $H$,
\item or the stable minimal oriented double cover of a non-orientable minimal surface such that when we attach a vertical handle, we get a surface isotopic to $H$.
\end{itemize}
These minimal surfaces have area at most $C$. Subsequently this sequence converges by \cite{Sharp} to a minimal surface $\Sigma^*$. By strong irreducibility, either the limit is two-sided and the convergence is smooth, or the limit is one-sided, the oriented double cover of $\Sigma^*$ is stable (by a Jacobi field argument, see \cite{Sharp} for instance) and $H$ is isotopic to it with a vertical handle attached. So $\Sigma^*$ is as in the statement of Theorem \ref{positivegenus}.


\end{proof}

\subsection{Case of non-prime oriented 3-manifolds} \label{general case}

When an oriented $3$-manifold is not prime, we can cut it along area-minimizing $2$-spheres obtained by minimizing the area of the separating essential spheres. They are either embedded or the double-cover of a projective plane.  Let $C_0$ be one of the component and denote by $\hat{C}_0$ the manifold obtained by closing with $3$-handles (i.e. gluying $3$-balls). If $\hat{C}_0$ has a strongly irreducible Heegaard splitting $H$, which we can suppose included in $C_0$, then Theorem \ref{positivegenus} applies and we obtain:

\begin{theo} \label{nonprime1}
Let $(\bar{M},g)$ be an oriented $3$-manifold and $\hat{C}_0$ as above with a strongly irreducible Heegaard splitting $H\subset C_0$. Then either $H$ is isotopic to a minimal surface of index at most one, or isotopic to the stable minimal oriented double cover of a non-orientable minimal surface with a vertical handle attached.

\end{theo}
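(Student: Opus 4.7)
\medskip

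The plan is to mirror the proof of Theorem \ref{positivegenus}, working inside the component $C_0 \subset \bar M$ with its induced metric rather than inside the capped-off manifold $\hat C_0$ (which has no natural Riemannian structure). The boundary $\partial C_0$ is a disjoint union of stable minimal spheres (or projective planes counted with multiplicity two), and each component of $\partial C_0$ bounds a $3$-ball cap in $\hat C_0$ which is a handlebody. Since $H$ is a strongly irreducible Heegaard splitting of $\hat C_0$ contained in $C_0$, the two handlebody complements of $H$ in $\hat C_0$ each contain a subcollection of these $3$-ball caps; this determines a partition $\partial C_0 = \Gamma_0 \sqcup \Gamma_1$. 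First I would verify that $(\Gamma_0,\Gamma_1)$ qualifies as an $H$-compatible couple in the sense of Section~2, with $N := C_0 = \bar M \setminus (B(\Gamma_0)\cup B(\Gamma_1))$ (where the $B(\Gamma_i)$ are the $3$-ball caps, viewed abstractly). A natural sweepout $\{\Sigma_t\}$ of $C_0$ with $\Sigma_t$ isotopic to $H$ for $t\in(0,1)$ and $\mathcal{A}(\{\Sigma_t\})=[|C_0|]$ is obtained by slightly pushing $H$ toward each side.

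Next, assuming $H$ is not isotopic to the stable double cover of a non-orientable minimal surface with a vertical handle attached, I would invoke the core-existence machinery: the proofs of Lemma \ref{max spheres} and Corollary \ref{existence core} use only that the stable boundary components bound handlebodies on the trivial side and that any stable surface of bounded area subsequentially smoothly converges, so they transfer verbatim to our setting (with $\bar M$ replaced by $C_0$ and $\hat C_0$ providing the handlebody structure). This produces a core $\mathfrak{C}\subset C_0$ with $(\Gamma_0,\Gamma_1)\preccurlyeq (\tilde\Gamma_0,\tilde\Gamma_1)$ and $\mathfrak{C}=C_0\setminus(B(\tilde\Gamma_0)\cup B(\tilde\Gamma_1))$, whose boundary components are stable minimal surfaces.

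With the core in hand, I would apply the local smooth min-max Theorem \ref{smoothminmax} to the saturated set $\Pi$ generated by sweepouts of $\mathfrak{C}$ by surfaces isotopic to $H$; the strict inequality $W(\mathfrak{C},\Pi)>\max\{\mathcal{H}^2(\tilde\Gamma_0),\mathcal{H}^2(\tilde\Gamma_1)\}$ follows from Lemma \ref{canapply2} since $\tilde\Gamma_0,\tilde\Gamma_1$ are stable and $\mathcal{A}(\{\Sigma_t\})=[|\mathfrak C|]$. This yields a minimal surface $\Sigma$ with at least one component $\Sigma'\subset\interior(\mathfrak{C})$ of index at most one. From here the contradiction argument of Theorem \ref{positivegenus} carries over without modification: in the orientable case, strong irreducibility of $H$ (Remark \ref{topolo}) together with $\gamma$-reduction of $\Sigma'$ on its non-handlebody side would enlarge the stable couple, contradicting maximality; in the non-orientable case, Lemma \ref{constructsweep} combined with the instability of the oriented double cover forces $W(\mathfrak{C},\Pi)<2\mathcal{H}^2(\Sigma')$, contradicting $2\mathcal{H}^2(\Sigma')\leq W(\mathfrak{C},\Pi)$. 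The non-bumpy case is reduced to the bumpy case via Lemma \ref{approximation} applied to $C$-cores and the compactness theorem of \cite{Sharp}, exactly as at the end of the proof of Theorem \ref{positivegenus}.

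The main obstacle I anticipate is confirming that the $H$-compatibility framework of Section~2, which was stated for closed irreducible $\bar M$, adapts cleanly when $\partial C_0\neq \varnothing$ and the ambient handlebody structure comes from $\hat C_0$ rather than from $C_0$ itself. In particular, one must ensure that the surgery side-determination from strong irreducibility still picks out a well-defined side of $H$ within $C_0$ when some of the relevant disks might pass through the $3$-ball caps; this is handled by noting that the caps are themselves handlebodies, so any essential compressing disk argument in $\hat C_0$ restricts to an argument in $C_0$ up to isotopy disjoint from $\partial C_0$, and the one-sided surgery property of Remark \ref{topolo} is preserved. Once this bookkeeping is in place, every other step is a direct transcription of the closed prime case.
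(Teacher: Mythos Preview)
Your proposal is correct and follows essentially the same approach as the paper: the paper simply asserts that, after cutting $\bar M$ along area-minimizing spheres to obtain $C_0$ and passing to $\hat C_0$, ``Theorem \ref{positivegenus} applies'', with no further argument. Your write-up is a faithful unpacking of what this entails---verifying that the $H$-compatibility, core, and local min-max machinery transfer to the bounded domain $C_0$---and the obstacle you flag (that the handlebody structure and one-sided surgery property must be read in $\hat C_0$ while the geometry lives in $C_0$) is exactly the bookkeeping the paper leaves implicit.
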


\section{Minimal Heegaard splittings in orientable 3-manifolds with positive scalar curvature} \label{R>0}

In this section, we specialize to the case of positive scalar curvature. We suppose that $M$ is a spherical space form $S^3/\Gamma$ not diffeomorphic to $S^3$. Assume that $M$ is endowed with a metric $g$ with scalar curvature at least $6$, then any embedded orientable stable surface is a sphere (see \cite{SchoenYau}) of area at most $4\pi/3$, and index one minimal surfaces also have an area bound \cite[Proposition A.1 (i)]{MaNe}.

\subsection{}\textbf{Existence of index one minimal Heegaard splitting with minimal genus}


A corollary of Theorem \ref{positivegenus} is the following existence theorem for irreducible minimal Heegaard splittings in spherical space forms with positive scalar curvature.
\begin{theo} \label{boomchak}
\begin{enumerate}
\item
Let $(M,g)$ be an $\mathbb{R}P^3$ which has scalar curvature at least $6$. Then 
\begin{itemize}
\item either $M$ contains an index one Heegaard splitting of genus one which has area less than $4\pi$,
\item or $M$ contains a minimal $\mathbb{R}P^2$ with stable oriented double cover and area less than $2\pi$.
\end{itemize}

\item
Let $(M,g)$ be a spherical space form $S^3/\Gamma$ not diffeomorphic to $S^3$ or $\mathbb{R}P^3$, and with positive scalar curvature. Let $ H$ be a strongly irreducible Heegaard splitting of $M$. Then $M$ contains an index one minimal surface isotopic to $H$.

\end{enumerate}
\end{theo}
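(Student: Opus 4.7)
I plan to reduce both parts to Theorem~\ref{positivegenus}, using positive scalar curvature to rule out the non-orientable alternative in part~(2) and to control the area in both alternatives of part~(1). The only genuinely new input is the strict bound by $4\pi$ in the torus alternative of part~(1), which will come from lifting to the universal cover $(S^3,\tilde g)$ and constructing a sweepout of small area via a Hersch-type argument in the spirit of \cite{Antoine}.

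For part~(2), I would first apply Theorem~\ref{positivegenus} to $H$ and obtain either a minimal surface $\Sigma$ isotopic to $H$ of index at most one, or a non-orientable minimal surface $\Sigma_0$ whose stable oriented double cover yields $H$ after attaching a vertical handle. Since $R>0$, any two-sided stable minimal surface in $M$ is a 2-sphere by Schoen--Yau, hence the double cover of $\Sigma_0$ is $S^2$ and $\Sigma_0\cong\mathbb{R}P^2$. A regular neighborhood of an embedded $\mathbb{R}P^2$ is a punctured $\mathbb{R}P^3$, so $M$ would contain $\mathbb{R}P^3$ as a prime summand; this contradicts the irreducibility of $S^3/\Gamma$ together with $M\not\cong\mathbb{R}P^3$. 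Hence the first alternative holds. An index zero $\Sigma$ would again be a 2-sphere by the same Schoen--Yau argument, which cannot be a Heegaard splitting of $S^3/\Gamma\not\cong S^3$; therefore the index equals one.

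For part~(1), I would apply Theorem~\ref{positivegenus} to the (strongly irreducible) genus one Heegaard splitting of $\mathbb{R}P^3$, obtaining the analogous dichotomy. In the non-orientable alternative, $\Sigma_0\cong\mathbb{R}P^2$ and its oriented double cover $\widetilde{\Sigma}_0$ is a stable 2-sphere in the universal cover $(S^3,\tilde g)$, where $R_{\tilde g}\geq 6$. Testing the stability inequality with $f\equiv 1$ and combining with the identity $\Ric(\nu,\nu)+|A|^2 = R/2 - K_{\widetilde{\Sigma}_0} + |A|^2/2$ valid for minimal surfaces, together with Gauss--Bonnet, gives $3\,\mathcal{H}^2(\widetilde{\Sigma}_0)\leq 4\pi$; hence $\mathcal{H}^2(\Sigma_0)\leq 2\pi/3<2\pi$, as required.

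The remaining and main difficulty is to establish the strict area bound $<4\pi$ when the first alternative of part~(1) occurs. Extending the strategy of \cite{Antoine}, I would work in $(S^3,\tilde g)$ and construct an antipodally equivariant smooth sweepout of $S^3$ by Heegaard tori of maximum area strictly less than $8\pi$. The Clifford-type foliation $\{|z|=\mathrm{const}\}$ is invariant slice by slice under the antipodal action, so it descends to a Heegaard sweepout of $\mathbb{R}P^3$; composing it with a one-parameter family of Möbius transformations of $S^3$ and running the Hersch center-of-mass balancing, together with the scalar curvature assumption used as in \cite{Antoine}, produces the required area estimate. Feeding this sweepout into Theorem~\ref{smoothminmax} applied to a core supplied by Corollary~\ref{existence core}, and reusing the Schoen--Yau argument of part~(2) to promote index at most one to index one, yields the desired index one minimal Heegaard torus of area strictly less than $4\pi$. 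The main obstacle is the equivariant Hersch balancing step: it must be carried out so that the family genuinely descends to $\mathbb{R}P^3$ and so that the strict inequality with $8\pi$ is preserved under the conformal adjustment.
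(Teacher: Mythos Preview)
Your treatment of part~(2) and of the $\mathbb{R}P^2$ alternative in part~(1) is correct and essentially matches the paper (your direct stability/Gauss--Bonnet computation even gives the sharper bound $\mathcal{H}^2(\Sigma_0)\leq 2\pi/3$).

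The genuine gap is in the torus alternative of part~(1). The equivariant Hersch step you flag as the ``main obstacle'' actually fails, not merely as a technicality: the M\"obius dilations of $S^3$ used in Hersch balancing do not commute with the antipodal involution (the centralizer of $-\Id$ in the conformal group of $S^3$ is exactly $O(4)$), so the conformally deformed Clifford foliation will not descend to $\mathbb{R}P^3$. More fundamentally, the Hersch-type width estimates of \cite{MaNe} and \cite{Antoine} are specific to \emph{sphere} foliations, where the coordinate functions $x_i$ serve as test functions and conformal invariance controls the area; there is no analogous mechanism for Clifford tori, whose areas under conformal deformation can be made arbitrarily large. So your proposed route does not produce a sweepout of $\mathbb{R}P^3$ by Heegaard tori with maximum area below $4\pi$.

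The paper's route is entirely different and avoids this issue. It starts not from Theorem~\ref{positivegenus} but from an \emph{area-minimizing} embedded $\mathbb{R}P^2$ $\Sigma\subset\mathbb{R}P^3$, which has area strictly less than $2\pi$ whenever the metric is not round by the rigidity theorem of Bray--Brendle--Eichmair--Neves \cite{BrayBrendleEichmairNeves} (the round case is handled directly by the projected Clifford torus, of area $\pi^2<4\pi$). If the oriented double cover of $\Sigma$ is stable, one is in the second alternative. If it is unstable, the catenoid estimate of \cite{KeMaNe}, packaged here as Lemma~\ref{constructsweep}, produces a smooth sweepout of a core containing $\Sigma$ by genus one Heegaard surfaces with maximum area strictly less than $2\mathcal{H}^2(\Sigma)<4\pi$; running the local min-max of Theorem~\ref{smoothminmax} on this sweepout then yields either the desired index one torus of area $<4\pi$ or another $\mathbb{R}P^2$ with stable double cover and area $<2\pi$. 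The key idea you are missing is that the unstable double cover of a small-area $\mathbb{R}P^2$ itself furnishes the area-controlled sweepout via the catenoid estimate, with no Hersch-type argument required.
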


Note that, since a spherical space form admits a positive curvature metric, it does not contain incompressible surfaces so any Heegaard splitting of minimal genus is strongly irreducible by \cite{CassonGordon}.

\begin{proof}
Observe that since the scalar curvature is positive any oriented stable minimal surface is a sphere, and a non-orientable minimal surface with stable oriented double cover has to be an $\mathbb{R}P^2$. Then the theorem follows essentially from Theorem \ref{positivegenus}. It remains to get the strict area upper bound in the case where $M$ is diffeomorphic to $\mathbb{R}P^3$. First by \cite{BrayBrendleEichmairNeves}, the area of an area-minimizing $\mathbb{R}P^2$ is smaller than $2\pi$ whenever $M$ is not round. The theorem is clearly true for the round $\mathbb{R}P^3$ (consider the projection of a Clifford torus). So let $\Sigma$ be such an area-minimizing projective plane and suppose that its oriented double cover is unstable. Attaching a vertical handle to the boundary of a thin tubular neighborhood of $\Sigma$, we get a genus one Heegaard splitting $H$ and there is an $H$-core $\mathfrak{C}$ containing $\Sigma$ in its interior (by the arguments of Lemma \ref{existence core}). As in the proof of Theorem \ref{positivegenus}, we use Lemma \ref{constructsweep} to find a smooth sweepout of $\mathfrak{C}$ with maximum area less than $2\mathcal{H}^2(\Sigma)$, so the local min-max procedure gives either an index one Heegaard splitting of genus one and area less than $4\pi$, or a minimal projective plane with area less than $2\pi$ and stable universal cover.

\end{proof}

\subsection{} \textbf{Existence of index one minimal Heegaard splitting with area less than $4\pi$}

The following result combined with Theorem \ref{boomchak} (1) can be thought of as an extension of Theorem 23 in \cite{Antoine} in the orientable case.

\begin{theo} \label{boomchak2}

Let $(M,g)$ be a spherical space form $S^3/\Gamma$ not diffeomorphic to $S^3$ or $\mathbb{R}P^3$, and with scalar curvature at least $6$. Then $M$ contains an index one minimal Heegaard splitting with area less than $4\pi$.

\end{theo}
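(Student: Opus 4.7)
The plan is to extend the approach of \cite[Theorem 23]{Antoine}---where the $4\pi$ area bound under $R\geq 6$ is obtained by pulling back a sweepout to the universal cover $\pi:(S^3,\tilde g)\to (M,g)$ and applying a Hersch-type conformal balancing---and couple it with the local min-max Theorem \ref{smoothminmax} so as to force the resulting minimal surface to realize a Heegaard splitting. Fix a minimal-genus Heegaard splitting $H$ of $M$; since a positively curved spherical space form is non-Haken, $H$ is strongly irreducible by \cite{CassonGordon}, and Theorem \ref{boomchak}(2) already provides an index one minimal surface isotopic to $H$, so the only remaining task is to upgrade the area bound to a strict inequality $<4\pi$. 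By Lemma \ref{approximation} we may assume $g$ is bumpy, and using Corollary \ref{existence core} we pick a core $\mathfrak{C}$ of $(M,H)$; under $R\geq 6$ every boundary component of $\mathfrak C$ is a stable minimal $2$-sphere of area at most $4\pi/3$. Let $\Pi$ denote the saturated family of smooth Heegaard sweepouts of $\mathfrak C$ as in Theorem \ref{smoothminmax}.

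The heart of the argument is the estimate $W(\mathfrak C, \Pi) < 4\pi$. Any $\{\Sigma_t\}\in\Pi$ lifts through $\pi$ to a $\Gamma$-equivariant sweepout $\{\tilde\Sigma_t\}$ of $S^3$ with $\mathcal H^2(\tilde\Sigma_t)=|\Gamma|\,\mathcal H^2(\Sigma_t)$. Following \cite{Antoine}, I construct a $\Gamma$-compatible family of M\"obius transformations $\{F_s\}$ of $(S^3,\tilde g)$ and reparameterize so that the balanced sweepout $\{F_{s(t)}(\tilde\Sigma_t)\}$ has maximum area at most $4\pi|\Gamma|$, with \emph{strict} inequality, because the equality case in the Hersch estimate would force $\tilde g$ to be round, and a round $\Gamma$-equivariant metric on $S^3$ descends only to $S^3$ or $\mathbb{R}P^3$. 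Descending back to $\mathfrak C$ and smoothing near $\partial\mathfrak C$ yields a competitor in $\Pi$ of max area $<4\pi$, proving $W(\mathfrak C,\Pi)<4\pi$. Since each boundary sphere has area $\leq 4\pi/3 < W(\mathfrak C,\Pi)$, Lemma \ref{canapply2} together with Theorem \ref{smoothminmax} then produces an embedded minimal surface $\Sigma^\infty\subset\mathfrak C$ of index at most one, total area $W(\mathfrak C,\Pi)<4\pi$, and with at least one component in $\interior(\mathfrak C)$.

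It remains to identify $\Sigma^\infty$ as a Heegaard splitting isotopic to $H$: maximality of the core together with the strong-irreducibility topology (Remark \ref{topolo}) forces the sphere components of $\Sigma^\infty$ to be absorbed into $\partial\mathfrak C$ and the unique non-sphere component to have multiplicity one, while the non-orientable alternative is ruled out under bumpy metrics via Lemma \ref{constructsweep} applied to its unstable double cover, which would otherwise yield a strictly cheaper sweepout contradicting the definition of $W(\mathfrak C,\Pi)$. Removing the bumpiness assumption by a Sharp-compactness approximation \cite{Sharp}, exactly as at the end of the proof of Theorem \ref{positivegenus}, concludes the argument. The main obstacle is the middle step: arranging the Hersch-type conformal balancing from \cite{Antoine} to be compatible with the $\Gamma$-action so that it descends to a genuine Heegaard sweepout of $\mathfrak{C}$ (the lifted slices are of comparatively large genus in $S^3$, and the Möbius family must be chosen to commute with $\Gamma$ up to reparameterization), and preserving the strict inequality $<4\pi$ under this descent.
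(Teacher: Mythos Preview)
Your argument has two genuine gaps, and the paper in fact proceeds along a different route that avoids them.

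First, the conformal balancing step does not work as written. The Hersch--type family $\{F_s\}$ of M\"obius transformations of $S^3$ has no reason to commute with the deck group $\Gamma$, so $F_{s(t)}(\tilde\Sigma_t)$ is not $\Gamma$--equivariant and does not descend to a sweepout of $\mathfrak C\subset M$; you flag this yourself as ``the main obstacle'' but never resolve it. Moreover, your rigidity argument for the strict inequality is incorrect: the equality case of the Hersch bound would at best force the \emph{surface} (not the ambient metric) to be conformally round, and in any case a round $\Gamma$--invariant metric on $S^3$ descends to \emph{every} spherical space form $S^3/\Gamma$, not only to $S^3$ or $\mathbb{R}P^3$. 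So neither the descent nor the strictness is justified.

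The paper's proof sidesteps both issues. It works in the Almgren--Pitts (not Simon--Smith) setting, chooses a class $\Pi_{\mathfrak C}$ with $\mathcal A(\Pi_{\mathfrak C})=[|\mathfrak C|]$, and first shows that $\mathbf L(\Pi_{\mathfrak C})$ is realized by an index one embedded minimal Heegaard splitting $\Sigma\subset\interior(\mathfrak C)$. For the area bound it lifts $\mathfrak C$ to $\tilde{\mathfrak C}\subset S^3$ and applies the local min-max Theorem~\ref{smoothminmax} \emph{upstairs} to produce a minimal \emph{sphere} $\tilde S\subset\interior(\tilde{\mathfrak C})$ of index at most one; the Hersch trick applied to $\tilde S$ itself gives $\mathcal H^2(\tilde S)\le 4\pi$. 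Its projection $S\subset\interior(\mathfrak C)$ is an immersed, non-embedded minimal surface, and the method of \cite[Propositions~19 and~22]{Antoine} (using the local separation property) turns $S$ into a homotopy sequence $\{\psi_i\}\in\Pi_{\mathfrak C}$ with $\mathbf L(\{\psi_i\})\le 4\pi$. Strictness then comes not from metric rigidity but from the fact that $S$ is non-embedded while Almgren--Pitts min-max produces an embedded limit, forcing $\mathbf L(\Pi_{\mathfrak C})<\mathbf L(\{\psi_i\})\le 4\pi$. Note finally that, precisely because the Almgren--Pitts setting is used, the Heegaard splitting obtained need not have minimal genus; the paper explicitly records this as open in Remark~\ref{physalis}, so your attempt to force the minimal-genus conclusion through the Simon--Smith class $\Pi$ is aiming at a stronger statement than the theorem asserts.
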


\begin{proof}

Let $M$ be a spherical space form not diffeomorphic to $S^3$, with scalar curvature at least $6$. Recall that if $M$ cannot contain an embedded $\mathbb{R}P^2$ since then by irreducibility $M$ would be an $\mathbb{R}P^3$. By Corollary \ref{existence core}, there is a core $\mathfrak{C}\subset M$. Observe that the boundary components are all spheres.

We now work in the discrete setting of Almgren-Pitts (see Section \ref{prelim}). Pick a homotopy class $\Pi_\mathfrak{C} \in \bigcup_{C_0,C_1} \pi^{\sharp}_1(\mathcal{Z}_{n}(M),C_0,C_1)$ in the sense of Almgren-Pitts corresponding to the fundamental class of $\mathfrak{C}$, with $C_0$, $C_1$ going through the (finitely many) possible choices, namely $\mathcal{A}(\Pi_\mathfrak{C}) = [|\mathfrak{C}|]$. The width of the core is $\mathbf{L}(\Pi_\mathfrak{C})$. Suppose the latter to be minimal with respect to the choice of $C_0$, $C_1$. 

\textbf{Claim:} There is an embedded connected minimal surface of index 1, which is also a Heegaard splitting of $M$, such that
$$\Sigma\subset \interior(\mathfrak{C}) \text{ and } \mathcal{H}^2(\Sigma) = \mathbf{L}(\Pi_\mathfrak{C}).$$
Indeed for a bumpy metric, we get a minimal surface $\Sigma$ in the interior of the core by Theorem \ref{discreteminmax}. Suppose that $\Sigma$ is oriented unstable. It has to be a Heegaard splitting because minimizing its area in the core on one side, we get stable spheres (which have to bound $3$-balls in this side) so we can make its area go to zero in this side (see \cite[Proposition 1]{MSY}). Thus applying Lemma \ref{constructsweep} on both sides of $\Sigma$, using interpolation and discretizing the sweepouts, we get that actually $\mathcal{H}^2(\Sigma) = \mathbf{L}(\Pi_\mathfrak{C})$. As in Theorems \ref{positivegenus} and \ref{boomchak} we rule out the case where $\Sigma$ is non-orientable. For general metrics, we use the approximation proved in Lemma \ref{approximation}. 

It remains to improve the area bound yielded by the usual Hersch trick. Suppose for simplicity that the metric is bumpy. We want to show $\mathbf{L}(\Pi_\mathfrak{C}) < 4\pi.$ For that purpose, let us consider the lift $\tilde{\mathfrak{C}}$ of $\mathfrak{C}$ to $S^3$. We apply Theorem \ref{smoothminmax}. There is a minimal sphere $\tilde{S}$ in the interior of $ \tilde{\mathfrak{C}}$ and its index is at most one so by the Hersch trick, $$\mathcal{H}^2(\tilde{S})\leq 4\pi.$$
After projecting on $M$, we get a non-embedded immersed minimal surface $S\subset \interior(\mathfrak{C})\subset M$. We want to apply the method of \cite[Proposition 19, Proposition 22]{Antoine} to $S$. If at embedded points of $S$ the projection is a local diffeomorphism then any generic closed curve in $\interior(\mathfrak{C})$ intersecting $S$ an odd number of times would lift to a closed curve intersecting $\tilde{S} \subset S^3$ an odd number of times, which is absurd. Hence in that case, inspecting the proof of \cite[Lemma 16]{Antoine}, we conclude that $S$ satisfies the local separation property (\textbf{LS}). If at embedded points of $S$ the projection is a double cover, then $2 \mathcal{H}^2(S) \leq 4\pi$ and we can simply reduce the boundary of every component of $\mathfrak{C} \backslash S$ at the same time. So in every case we construct a homotopy sequence $\{\psi_i\}\in \Pi_\mathfrak{C}$ such that 
$$\mathbf{L}(\{\psi_i\}) \leq 4\pi
 \text{ and } \mathbf{L}(\Pi_\mathfrak{C}) < \mathbf{L}(\{\psi_i\}),$$
the strict inequality coming from the fact that $S$ is not embedded while Almgren-Pitts min-max theory produces embedded surfaces (see \cite{Antoine}).

Finally, if the metric is not bumpy, we use Lemma \ref{approximation} to get minimal spheres $\tilde{S}_m$ in the interior of the lifts $\tilde{\mathfrak{C}}_m$. Since ${\mathfrak{C}}_m$ are cores, $\tilde{S}_m$ cannot converge to a sphere of $\partial \tilde{\mathfrak{C}}$ so there is a limit minimal sphere in the interior of $ \tilde{\mathfrak{C}}$, which projects to $S$ in $M$. The end of the argument is the same as previously. 

\end{proof}

\begin{remarque} \label{physalis}
\begin{itemize}
\item Here is an observation following from \cite[Proposition 1]{MSY}: if $M$ is a spherical space form with Heegaard genus two and positive scalar curvature, then any minimal torus is included in a $3$-ball.
\item The above theorem does not provide information on the genus of the Heegaard splitting with area less than $4\pi$ since we are using a projection argument and the min-max theory of Almgren-Pitts. For $M$ as in the theorem, is there a Heegaard splitting with area less than $4\pi$, which has minimal genus? In the special case $Ric>0$, this is true by combining \cite{MaNe} and \cite{KeMaNe} but it seems quite arduous to extend the method to the case $R>0$ (see \cite{Antoinespheres}).
\end{itemize}
\end{remarque}

\subsection{Case of non-prime closed oriented 3-manifolds with positive scalar curvature}

An oriented closed manifold with positive scalar curvature is diffeomorphic to the connected sum of spherical space forms and some $S^1\times S^2$'s \cite{SchoenYau, GromovLawson, Perelman1, Perelman2, Perelman3} (see also \cite[Corollary 0.5, Chapter 15]{MorganTian}, \cite[Theorem 7.1, b)]{Marquesscalar}). When $M$ is not a $3$-sphere, write 
\begin{align} \label{decomposition prime}
\begin{split}
M\approx \quad&  a.\mathbb{R}P^3 \# S^3/\Gamma_1\#...\# S^3/\Gamma_b\\ 
& \# S^3/\tilde{\Gamma}_1\#...\# S^3/\tilde{\Gamma}_c \# d. S^1\times S^2
\end{split}
\end{align}
where the factors $S^3/\Gamma_i$ are not diffeomorphic to $\mathbb{R}P^3$ and do not contain embedded non-orientable surfaces, the factors $S^3/\tilde{\Gamma}_i$ are not diffeomorphic to $\mathbb{R}P^3$ and contain an embedded non-orientable surface. Here $a$ (resp. $d$) is the number of $\mathbb{R}P^3$ (resp. $S^1\times S^2$) in the prime decomposition.

Recall that $S^1\times S^2$ is the only orientable non-irreducible prime $3$-manifold. As in subsection \ref{general case}, everything extends to the non prime case.

\begin{theo} \label{nonprime2}
Let $(\bar{M},g)$ be an oriented $3$-manifold not diffeomorphic to $S^3$, with scalar curvature at least $6$, and let $\hat{C}_0$ be as in Theorem \ref{nonprime1}. Suppose that $H \subset C_0$ is a strongly irreducible Heegaard splitting of $\hat{C}_0$. 

\begin{itemize}
\item
When $ \hat{C}_0$ is not an $\mathbb{R}P^3$, then $H$ is isotopic to a minimal surface of index one, and there is an index one minimal surface $\Sigma\subset C_0$ of area less than $4\pi$, such that $\Sigma$ is a Heegaard splitting in $\hat{C}_0$.
\item
When $\hat{C}_0$ is an $\mathbb{R}P^3$, either there is an index one minimal torus isotopic to $H$ of area less than $4\pi$ or a minimal $\mathbb{R}P^2$ of area less than $2\pi$ with stable universal cover $\tilde{S}$, and $H$ is isotopic to $\tilde{S}$ plus a vertical handle attached.
\end{itemize}
\end{theo}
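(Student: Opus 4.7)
The plan is to combine Theorem~\ref{nonprime1} (which gives a minimal representative of $H$ in $C_0$) with the lift-to-$S^3$ plus Hersch-trick argument from the proof of Theorem~\ref{boomchak2} (which gives the area bound), all localized to a core of $(\hat{C}_0,H)$ contained in $C_0$. Since $\hat{C}_0$ admits a strongly irreducible Heegaard splitting it is irreducible, so by the prime decomposition $\hat{C}_0\cong S^3/\Gamma$ is a spherical space form. When $\hat{C}_0\not\cong \mathbb{R}P^3$, by~(\ref{decomposition prime}) this factor contains no embedded non-orientable surface, so the non-orientable alternative in Theorem~\ref{nonprime1} is automatically ruled out. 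When $\hat{C}_0\cong \mathbb{R}P^3$, Theorem~\ref{boomchak}(1) handles the closed analogue and the same scheme localizes to $C_0$.

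First I would apply Theorem~\ref{nonprime1} to $H\subset C_0$ to produce either an index-at-most-one minimal Heegaard surface inside $C_0$, or (only if $\hat{C}_0\cong \mathbb{R}P^3$) a minimal $\mathbb{R}P^2\subset C_0$ with stable oriented double cover. Next I would construct a core $\mathfrak{C}$ of $(\hat{C}_0,H)$ sitting inside $C_0$ as follows: the components of $\partial C_0$ are area-minimizing spheres, hence stable, so one can take them (after assigning each a side) as the initial $H$-compatible couple $(\Gamma_0,\Gamma_1)$, then apply Lemma~\ref{max spheres} to enlarge to a maximal $H$-compatible couple $\tilde{S}$. Its complement is a core $\mathfrak{C}\subset C_0$ (in the sense of Definition~\ref{coredef}).

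For the area bound in the case $\hat{C}_0\not\cong\mathbb{R}P^3$, I would lift $\mathfrak{C}$ to the universal cover $S^3$ of $\hat{C}_0$, obtaining a domain $\tilde{\mathfrak{C}}\subset S^3$ whose boundary is a disjoint union of minimal spheres (the lifts of the spheres in $\partial\mathfrak{C}$). Applying Theorem~\ref{smoothminmax} to $\tilde{\mathfrak{C}}$ with the saturated set generated by genus-zero sweepouts, together with the Hersch trick for index-one $2$-spheres, yields a minimal sphere $\tilde{S}\subset \interior(\tilde{\mathfrak{C}})$ with $\mathcal{H}^2(\tilde{S})\le 4\pi$. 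Projecting $\tilde{S}$ down gives a (possibly non-embedded) immersed minimal surface $S\subset\interior(\mathfrak{C})$, and the local-separation-versus-two-sheeted-projection dichotomy used in Theorem~\ref{boomchak2} produces a discrete Almgren-Pitts sweepout $\{\psi_i\}\in \Pi_{\mathfrak{C}}$ with $\mathbf{L}(\{\psi_i\})\le 4\pi$. Because the min-max theory yields embedded surfaces whereas $S$ need not be embedded, one gets the strict inequality $\mathbf{L}(\Pi_{\mathfrak{C}})<4\pi$, and Theorem~\ref{discreteminmax} (combined with the maximality/strong-irreducibility surgery arguments from the proof of Theorem~\ref{positivegenus}) produces an index-one embedded minimal Heegaard splitting $\Sigma\subset \interior(\mathfrak{C})\subset C_0$, isotopic to $H$ in $\hat{C}_0$ and of area less than $4\pi$. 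For non-bumpy $g$ one passes to the limit using Lemma~\ref{approximation} and the compactness of \cite{Sharp}.

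The $\mathbb{R}P^3$ case follows the scheme of Theorem~\ref{boomchak}(1) applied inside $C_0$: minimize to find an $\mathbb{R}P^2$ of area less than $2\pi$ (area bound from \cite{BrayBrendleEichmairNeves}, round case separate); if its oriented double cover is unstable, attach a vertical handle to form a genus-one Heegaard splitting $H$, build a core containing the $\mathbb{R}P^2$ in its interior, and apply Lemma~\ref{constructsweep} to get a sweepout of the core with maximum area strictly less than $2\mathcal{H}^2(\mathbb{R}P^2)<4\pi$, then run local min-max. The main obstacle I expect is checking that the core $\mathfrak{C}$ constructed by Lemma~\ref{max spheres} actually stays inside $C_0$ (the enlargement procedure must not leak stable spheres across $\partial C_0$ into neighboring prime factors, which is ensured by the area-minimality of $\partial C_0$) and, for the lift step, verifying that each connected component of $\tilde{\mathfrak{C}}\subset S^3$ satisfies the mean-convex/minimal boundary hypothesis of Theorem~\ref{smoothminmax} so that the genus-zero sweepout can be built on a single component and its projection defines a valid sweepout of $\mathfrak{C}$ with the required Almgren-map class $[|\mathfrak{C}|]$.
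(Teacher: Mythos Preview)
Your overall strategy is exactly the paper's: the paper states Theorem~\ref{nonprime2} without a separate proof, simply remarking that ``everything extends to the non-prime case,'' i.e.\ one localizes Theorems~\ref{boomchak} and~\ref{boomchak2} to $C_0$ via Theorem~\ref{nonprime1} and a core contained in $C_0$. Your outline does precisely this.

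Two slips, however:

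\textbf{(i)} Your justification for ruling out the non-orientable alternative when $\hat{C}_0\not\cong\mathbb{R}P^3$ is wrong. You claim that by~(\ref{decomposition prime}) such a factor ``contains no embedded non-orientable surface,'' but that decomposition explicitly allows factors $S^3/\tilde\Gamma_i$ which \emph{do} contain embedded non-orientable surfaces. The correct argument (as in the proof of Theorem~\ref{boomchak}) is that positive scalar curvature forces any orientable stable minimal surface to be a sphere, so a non-orientable minimal surface with stable oriented double cover must be an $\mathbb{R}P^2$, and an embedded $\mathbb{R}P^2$ in an irreducible $3$-manifold forces it to be $\mathbb{R}P^3$.

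\textbf{(ii)} You overclaim at the end of the area-bound step: the surface $\Sigma$ with $\mathcal{H}^2(\Sigma)<4\pi$ is produced via the Almgren--Pitts width $\mathbf{L}(\Pi_{\mathfrak{C}})$ and Theorem~\ref{discreteminmax}, which carries \emph{no} genus or isotopy information (cf.\ Remark~\ref{physalis}). The surgery/strong-irreducibility arguments from Theorem~\ref{positivegenus} live in the Simon--Smith setting and do not transfer. So $\Sigma$ is only known to be \emph{some} Heegaard splitting of $\hat{C}_0$, not one isotopic to $H$. This is exactly why the theorem's first bullet has two separate clauses: ``$H$ is isotopic to a minimal surface of index one'' (from Theorem~\ref{nonprime1}, Simon--Smith) \emph{and} ``there is an index one minimal surface $\Sigma$ of area less than $4\pi$ which is a Heegaard splitting'' (from the lift argument, Almgren--Pitts). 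Keep them separate.
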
 

Let $\mathcal{I}(I)$ be the family of minimal surfaces with index at most $I$. When the scalar curvature is positive, Theorem 1.5 in \cite{Carlottofinite} shows that generically $\mathcal{I}(I)$ is finite for all integer $I$. A corollary of our previous theorem is a lower bound for the cardinal of $\mathcal{I}(1)$ for any metric:

\begin{coro}
Let $(\bar{M},g)$ be as in (\ref{decomposition prime}) and with positive scalar curvature. Then $\mathcal{I}(1)$ has at least $a+2b+3c+2d-1$ elements.  

\end{coro}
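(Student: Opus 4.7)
The plan is to exhibit four disjoint families of minimal surfaces of index at most one in $\bar M$, each associated with a distinct feature of the prime decomposition (\ref{decomposition prime}), and to verify pairwise distinctness by topological type or disjoint location. First, I would fix a maximal system of disjoint separating essential $2$-spheres realising this decomposition: there are $k-1=a+b+c+d-1$ pairwise non-isotopic classes. Positive scalar curvature together with area minimisation in each isotopy class, in the spirit of \cite{MSY}, produces a stable minimal sphere in every class, accounting for the first $a+b+c+d-1$ elements of $\mathcal{I}(1)$.

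Second, each of the $d$ summands $S^{1}\times S^{2}$ carries, inside the corresponding piece $C_{0}\subset\bar M$, a non-separating essential $2$-sphere (the $\pi_{2}$-generator of that factor); its isotopy class is distinct from every separating class above, since separation is an isotopy invariant. Area minimisation in this class gives $d$ further stable minimal spheres. Third, for each of the $b+c$ spherical summands $\hat C_{0}$ not diffeomorphic to $S^{3}$ or $\mathbb{R}P^{3}$, the minimum genus Heegaard splitting $H$ is strongly irreducible by \cite{CassonGordon}, since spherical space forms contain no incompressible surfaces of positive genus. Isotoping $H$ into $C_{0}$, Theorem \ref{nonprime2} provides an index one minimal representative of $[H]$ inside $C_{0}$; it has positive genus, hence is not a sphere, and the $b+c$ surfaces so obtained lie in disjoint pieces, so all are pairwise distinct from each other and from every sphere above. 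This contributes $b+c$ more elements of $\mathcal{I}(1)$.

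Fourth, each factor $S^{3}/\tilde\Gamma_{i}$ carries, by the very definition of type~$c$, an embedded non-orientable surface, and therefore a non-trivial class in $H_{2}(\hat C_{0};\mathbb{Z}/2)$ not representable by any orientable closed surface. Applying the argument of \cite{FreedmanHassScott} inside the corresponding piece should produce an embedded non-orientable minimal surface whose oriented double cover is area-minimising, hence stable; the downstairs surface then has index zero, as anti-invariant variations on the cover form a subspace of all variations. This gives $c$ more elements of $\mathcal{I}(1)$, all non-orientable and therefore distinct from every orientable surface produced earlier. Summing the four families,
$$
(a+b+c+d-1)+d+(b+c)+c=a+2b+3c+2d-1,
$$
as required.

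The most delicate point is the fourth step: the non-orientable minimiser must lie in the intended factor piece rather than escape onto or across the boundary spheres. My plan to handle this is to perform the minimisation inside the closed piece, whose boundary consists of stable minimal $2$-spheres acting as mean-convex barriers on both sides; since a sphere cannot represent a non-orientable $\mathbb{Z}/2$-class, the minimiser cannot collapse onto the boundary, and the stable barriers prevent it from crossing into a neighbouring piece. A secondary subtlety is that the formula is manifestly not tight on $\mathbb{R}P^{3}$ factors (each of which supplies a further torus or $\mathbb{R}P^{2}$ by Theorem \ref{nonprime2}), but no such contribution is needed to reach the stated lower bound.
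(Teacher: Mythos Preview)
Your overall strategy is correct and is the count the paper has in mind (the corollary is stated there without proof). The genuine gap is in your fourth step. The embedded non-orientable surface carried by a type-$c$ factor $S^3/\tilde\Gamma_i$ is \emph{never} incompressible: spherical space forms have finite fundamental group, so any closed embedded surface other than $S^2$ or $\mathbb{R}P^2$ is compressible there. The result of \cite{FreedmanHassScott}, which the paper itself only invokes for incompressible surfaces, therefore does not apply, and you have no reason to expect the oriented double cover of your minimiser to be area-minimising or even stable. Your index argument (``anti-invariant variations on the cover form a subspace of all variations'') then has no input to work with.

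The repair is simple. Since $H_2(\hat C_0;\mathbb{Z})=0$ for a spherical space form, every orientable closed surface is $\mathbb{Z}/2$-nullhomologous, so the non-trivial class in $H_2(\hat C_0;\mathbb{Z}/2)$ you identified cannot be carried by orientable surfaces alone. Minimise area directly among embedded closed surfaces in this $\mathbb{Z}/2$-class inside the piece $C_0$; the stable minimal boundary spheres act as barriers, and the minimiser is a smooth embedded stable minimal surface in $\interior(C_0)$ with at least one non-orientable component $\Sigma$. Being a component of an area-minimiser, $\Sigma$ itself has index zero---no detour through the double cover is needed. Finally, since $\hat C_0\not\cong\mathbb{R}P^3$ it contains no embedded $\mathbb{R}P^2$ (the complement of a tubular neighbourhood would be a ball, forcing $\hat C_0\cong\mathbb{R}P^3$), so $\Sigma$ is non-orientable of genus at least $2$ and is automatically distinct from every sphere or $\mathbb{R}P^2$ arising in your first two steps. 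With this correction, the rest of your distinctness bookkeeping goes through.
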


\bibliographystyle{plain}
\bibliography{biblio18_09_25}

\end{document}